\newcounter{rmnum}
\newenvironment{romannum}{\begin{list}{{\upshape (\roman{rmnum})}}{\usecounter{rmnum}
\setlength{\leftmargin}{14pt}
\setlength{\rightmargin}{8pt}
\setlength{\itemsep}{2pt}
\setlength{\itemindent}{-1pt}
}}{\end{list}}
\newcounter{anum}
\newlength{\noteWidth}
\long\def\notes#1{\ifinner
             {\tiny #1}
             \else
              \marginpar{\parbox[t]{\noteWidth}{\raggedright\tiny #1}}
               \fi}
\def\IEEEQEDclosed{\mbox{\rule[0pt]{1.3ex}{1.3ex}}}
\def\qed{\ifmmode\IEEEQEDclosed\else{\unskip\nobreak\hfil
\penalty50\hskip1em\null\nobreak\hfil\IEEEQEDclosed
\parfillskip=0pt\finalhyphendemerits=0\endgraf}\fi}
\def\qed{\hspace*{\fill}~\IEEEQED\par\endtrivlist\unskip}
\def\Re{\mathbb{R}}
\def\R{\mathbb{R}}
\def\ind{\text{\rm\large 1}}
\def\Sec#1{Sec.~\ref{#1}}
\def\Fig#1{Fig.~\ref{#1}}
\def\notes#1{\marginpar{\tiny #1}\typeout{Notes!
Notes!
Notes!
}}
\renewcommand{\notes}[1]{\typeout{notes!}}
\def\ind{\bbbone}
\def\Re{\field{R}}
\def\k{{\sf K}}
\def\Sec#1{Sec.~\ref{#1}}
\def\clZ{{\cal Z}}
\def\Sec#1{Sec~\ref{#1}}
\def\E{{\sf E}}
\def\Expect{{\sf E}}
\def\Prob{{\sf P}}
\def\Expect{{\sf E}}
\def\R{\mathbb{R}}
\def\Fig#1{Fig.~\ref{#1}}
\def\Sec#1{Sec.~\ref{#1}}
\def\IEEEQEDclosed{\mbox{\rule[0pt]{1.3ex}{1.3ex}}}
\def\qed{\nobreak\hfill\IEEEQEDclosed}
\newcommand{\expect}{ {\sf E} }
\def\clZ{{\cal Z}}
\newtheorem{theorem}{Theorem}
\newtheorem{example}{Example}
\newtheorem{definition}{Definition}
\newtheorem{lemma}{Lemma}
\newtheorem{remark}{Remark}
\newtheorem{proposition}{Proposition}
\def\beq{\begin{eqnarray}} 
\def\bc{\begin{center}} 
\def\be{\begin{enumerate}}
\def\bi{\begin{itemize}} 
\def\bs{\begin{small}}
\def\bS{\begin{slide}}
\def\ec{\end{center}} 
\def\ee{\end{enumerate}}
\def\ei{\end{itemize}}
\def\es{\end{small}}
\def\eS{\end{slide}}
\def\eeq{\end{eqnarray}}
\def\hah{\hat{h}}
\def\hah{\hat{h}}
\newcommand{\newP}[1]{\medskip\noindent{\bf #1:}}
\newcommand{\ud}{\,\mathrm{d}}
\def\Re{\mathbb{R}}
\def\E{{\sf E}}
\def\ind{\text{\rm\large 1}}
\def\Sec#1{Sec.~\ref{#1}}
\def\Expect{{\sf E}}
\def\clZ{{\cal Z}}
\def\hah{\hat{h}}
\renewcommand{\Re}{\mathbb{R}}
\def\Prob{{\sf P}}
\newcommand{\keps}{{k}_{\epsilon}}
\newcommand{\kepsN}{{k}_{\epsilon}^{(N)}}
\newcommand{\Keps}{{K}_\epsilon}
\newcommand{\KepsN}{{K}_\epsilon^{(N)}}
\newcommand{\phiepsN}{{\phi}^{(N)}_\epsilon}
\newcommand{\phieps}{{\phi}_{\epsilon}}
\newcommand{\Teps}{{T}_{\epsilon}}
\newcommand{\TepsN}{{T}^{(N)}_\epsilon}
\newcommand{\geps}{{g}_{\epsilon}}
\newcommand{\pr}{\rho}
\newcommand{\vepsN}{{{\sf K}}_\epsilon^{(N)}}
\newcommand{\Geps}{{G}_\epsilon}
\newcommand{\GepsN}{{G}_\epsilon^{(N)}}
\newcommand{\Tm}{{\bf T}}
\newcommand{\hm}{{\bf h}}
\title{\LARGE \bf
Error Estimates for the Kernel Gain Function Approximation \\ in the Feedback
Particle Filter}
\author{Amirhossein Taghvaei, Prashant G. Mehta, Sean P. Meyn 
\thanks{Financial support from the NSF CMMI grants 1334987 and 1462773 is gratefully acknowledged. 
}
\thanks{A.~Taghvaei and P.~G.~Mehta are with the Coordinated
  Science Laboratory and the Department of Mechanical Science and
  Engineering at the University of Illinois at Urbana-Champaign (UIUC). S. P. Meyn is with the Department of Electrical and Computer Engineering at the University of Florida
{\tt\scriptsize taghvae2@illinois.edu; mehtapg@illinois.edu; meyn@ece.ufl.edu};}
}
\begin{document}

\maketitle
\thispagestyle{empty}
\pagestyle{empty}

\begin{abstract}

This paper is concerned with the analysis of the 
kernel-based algorithm for gain function approximation 
in the feedback particle filter.  The exact gain
function is the solution of a Poisson equation involving a
probability-weighted Laplacian.  The kernel-based method --
introduced in our prior work -- allows one
to approximate this solution using {\em only}  particles sampled from
the probability distribution.  This paper describes
new representations and algorithms based on the kernel-based method.
Theory surrounding the approximation is improved and a novel formula for the gain function
approximation is derived.  A procedure
for carrying out error analysis of the approximation is introduced.
Certain asymptotic estimates for bias and variance are derived
for the general nonlinear non-Gaussian case.  Comparison with the
constant gain function approximation is provided.
The results are 
illustrated with the aid of some numerical experiments.  

\end{abstract}
\section{Introduction}

This paper is concerned with the analysis of the kernel-based algorithm for numerical 
approximation of the gain function in the feedback particle filter algorithm; cf.~\cite{amirCDC2016}.  
The filter represents a numerical solution of the following continuous-time 
nonlinear filtering problem:
\begin{subequations}
\begin{flalign}
&\text{Signal:} \quad &\ud X_t &= a(X_t)\ud t + \ud B_t,\quad X_0\sim p_0^*,&
\label{eqn:Signal_Process}
\\
&\text{Observation:} \quad &\ud Z_t &= h(X_t)\ud t + \ud W_t,&
\label{eqn:Obs_Process}
\end{flalign}
\end{subequations}
where $X_t\in\Re^d$ is the (hidden) state at time $t$, the initial
condition $X_0$ has the prior density $p_0^*$, $Z_t \in\Re$ is the
observation, and $\{B_t\}$, $\{W_t\}$ are mutually independent
standard Wiener processes taking values in $\Re^d$ and $\Re$, respectively. The mappings 
$a(\cdot): \Re^d \rightarrow \Re^d$ and $h(\cdot): \Re^d \rightarrow
\Re$ are given $C^1$ functions.    The goal of the filtering problem is to approximate the posterior 
distribution of the state $X_t$ given the time history of observations (filtration) $\clZ_t :=
\sigma(Z_s:  0\le s \le t)$. 

\smallskip

The feedback particle filter (FPF) is a controlled stochastic differential equation (sde),
\begin{equation}
\begin{aligned}
&\text{FPF:}  \nonumber \\
&\ud X^i_t = a(X^i_t) \ud t + \ud B^i_t + \k_t(X^i_t) \circ (\ud Z_t -
\frac{h(X^i_t) + \hat{h}_t}{2}\ud t),
\;\; X_0^i \sim p_0^*, 
\end{aligned}
\label{eqn:particle_filter_nonlin_intro}
\end{equation}
for $i=1,\ldots,N$, where $X_t^i\in\Re^d$ is the state of the $i^\text{th}$ particle at time $t$, the initial condition $X^i_0\sim p_0^*$, $B^i_t$ is a standard Wiener process, and
$\hat{h}_t := \E[h(X_t^i)|\mathcal{Z}_t]$.  Both  $B^i_t$ and
$X^i_0$ are mutually independent and also independent of $X_t,Z_t$.  The $\circ$
indicates that the sde is expressed in its Stratonovich form.

\smallskip

The gain function $\k_t$ is obtained by solving a weighted Poisson
equation: For each fixed time $t$, the function $\phi$ is the solution to a Poisson equation,
\begin{flalign}
\label{eqn:EL_phi_intro}
\text{PDE:} && &
\begin{aligned}
\nabla \cdot (p(x,t) \nabla \phi(x,t) ) & = - (h(x)-\hat{h}) p(x,t),\\
\int \phi(x,t) p(x,t) \ud x & = 0 \qquad \text{(zero-mean)},
\end{aligned} &
\end{flalign}
where $\nabla$ and $\nabla \cdot $ denote the
gradient and the divergence operators, respectively, and
$p$ denotes the conditional density of $X_t^i$ given
$\mathcal{Z}_t$.  
In terms of the solution $\phi$, the gain function is given by,
\begin{flalign*}
&\text{Gain Function:}& \quad 
\k_t(x)&= \nabla \phi(x,t)\, . &
\label{eqn:gradient_gain_fn_intro}
\end{flalign*}
The gain function $\k_t$ is vector-valued (with dimension
$d\times 1$) and it needs to be obtained for each fixed time $t$.
For the linear Gaussian case, the gain function
is the Kalman gain.

\smallskip

FPF is an exact algorithm:  If the initial condition $X^i_0$ is
sampled from the prior $p_0^*$ then 
\[
\Prob [X_t \in A\mid \clZ_t ] = \Prob [X_t^i \in A\mid \clZ_t ], \quad \forall\;A\subset \mathbb{R}^d,\;\;t>0.
\]
In a numerical implementation, a finite number, $N$, of particles is simulated and 
$\Prob [X_t^i \in A\mid \clZ_t ] \approx \frac{1}{N}\sum_{i=1}^N \ind
[ X^i_t\in A]$ by the Law of Large Numbers (LLN).

\smallskip

The challenging part in the numerical implementation of the FPF
algorithm is the solution of the PDE~\eqref{eqn:EL_phi_intro}.  This has
been the subject of a number of recent studies:  In our original FPF
papers, a Galerkin numerical method was proposed; cf.,~\cite{yang2016,taoyang_TAC12}.  A
special case of the Galerkin solution is the constant gain
approximation formula which is often a popular choice in
practice~\cite{yang2016,stano2014,tilton2013,berntorp2015}.  The main issue with the Galerkin approximation
is to choose the basis functions. A proper orthogonal decomposition (POD)-based
procedure to select basis functions is introduced
in~\cite{berntorp2016} and certain continuation schemes appear
in~\cite{matsuura2016suboptimal}.  Apart from the Galerkin procedure,
probabilistic approaches based on dynamic programming appear
in~\cite{Sean_CDC2016}.

\smallskip

In a recent work, we introduced a basis-free {\em kernel-based}
algorithm for approximating the solution of the gain function~\cite{amirCDC2016}.  The
key step is to construct a Markov matrix on the $N$-node graph defined
by the $N$ particles $\{X^i_t\}_{i=1}^N$.
The value of the function $\phi$ for the particles, $\phi(X^i_t)$, is then
approximated by solving a fixed-point problem involving
the Markov matrix.  The
fixed-point problem is shown to be a contraction and the method of
successive approximation applies to numerically obtain the solution.

\smallskip

The present paper presents a continuation and refinement of the
analysis for the kernel-based method.  The contributions are as
follows: A novel formula for the gain function is derived for the
kernel-based approximation.  A procedure
for carrying out error analysis of the approximation is introduced.
Certain asymptotic estimates for bias and variance are derived
for the general nonlinear non-Gaussian case.  Comparison with the
constant gain approximation formula are provided.  These results are 
illustrated with the aid of some numerical experiments.


The outline of the remainder of this paper is as follows: The
mathematical problem of the gain function approximation together with
a summary of known results on this topic appears in~\Sec{sec:prelim}.  
The kernel-based algorithm including the novel formula for gain
function, referred to as (G2), appears in \Sec{sec:kernel}.  
The main theoretical results of this paper including the bias and variance
estimates appear in \Sec{sec:analysis}.  Some numerical experiments for
the same appear in \Sec{sec:numerics}.  

\medskip

\noindent \textbf{Notation.} $\mathbb{Z}_+$ denotes the set of
positive integers and $\mathbb{Z}_+^d$ is the set of $d$-tuples.  
For vectors $x,y\in\Re^d$, the dot product is denoted as $x\cdot y$ and
$|x|:=\sqrt{x\cdot x}$.  
Throughout the paper, it is assumed that the probability measures
admit a smooth Lebesgue density.  A density for a Gaussian random
variable with mean $\mu$ and variance $\Sigma$ is denoted as ${\cal N}(\mu,\Sigma)$.  $C^k$ is used to denote the space of $k$-times
continuously differentiable functions.  For a function $f$, $\nabla f = \frac{\partial f}{\partial x_i}$ is
used to denote the gradient.  
$L^2(\Re^d,\pr)$ is the Hilbert space of square integrable functions on
$\Re^d$ equipped with the inner-product, $\big
<\phi,\psi\big>_{L^2}:=\int\phi(x)\psi(x) \pr(x)\ud x$. 
The associated norm is
denoted as $\|\phi\|^2_2:=\big<\phi,\phi\big>$. The space  
$H^1(\Re^d,\pr)$ is the
space of square integrable functions $\phi$ whose derivative (defined in the
weak sense) is in $L^2(\Re^d,\pr)$.  For the remainder of this paper,
$L^2$ and $H^1$ is used to denote $L^2(\Re^d,\pr)$ and
$H^1(\Re^d,\pr)$, respectively.

\section{Preliminaries}
\label{sec:prelim}
\subsection{Problem Statement}

The Poisson equation~\eqref{eqn:EL_phi_intro} is expressed as,
\begin{flalign}
\label{eqn:EL_phi_prelim}
\text{PDE} && &
\begin{aligned}
-\Delta_\rho \phi & = h-\hah,\\
\int \phi \rho \ud x & = 0 \qquad \text{(zero-mean)},
\end{aligned} &
\end{flalign}
where $\rho$ is a probability density on $\Re^d$, $\Delta_\rho
\phi:=\frac{1}{\rho}\nabla \cdot (\rho\nabla \phi)$. 
The gain function $\k(x):=\nabla\phi(x)$. 

\medskip

\noindent\textbf{Problem statement:} Given $N$ independent
samples $\{X^1,\hdots,X^i,\hdots,X^N\}$ drawn from $\rho$, approximate the gain function $\{\k(X^1),\hdots,\k(X^i),\hdots,\k(X^N)\}$.  The density $\rho$ is not explicitly
known.  

\medskip

The appropriate function space for the solutions
of~\eqref{eqn:EL_phi_prelim} is the co-dimension 1 subspace $L^2_0
:=\{\phi \in L^2; \int \phi \rho \ud x =0\}$ and $H^1_0 :=\{\phi \in
H^1; \int \phi \rho \ud x =0\}$; cf.~\cite{laugesen15,yang2016}.

\subsection{Existence-Uniqueness}

On multiplying both side of~\eqref{eqn:EL_phi_prelim} by test function $\psi$, one obtains the weak-form of the PDE:
\begin{equation}
\int \nabla \phi \cdot\nabla \psi \;\rho \ud x = \int  (h-\hah) \psi \; \rho \ud x,\quad \forall \psi \in H^1.
\label{eq:weakform}
\end{equation}

The following is assumed throughout the paper:
\begin{romannum}
\item {\bf Assumption A1:} The probability density is of the form
  $\rho(x)= e^{-V(x)}$ where $V \in C^2$ with \[\liminf_{x\to\infty}
  \;\; [-\Delta V(x) + \frac{1}{2}|\nabla V(x)|^2]=\infty.\]
\item {\bf Assumption A2:} The function $h,\nabla h \in L^2$. 
\end{romannum}
 
Under the Assumption A1, the density $\rho$ admits a spectral gap (or Poincar\'e inequality) (\cite{bakry2013} Thm 4.6.3), i.e., $\exists \lambda_1 >0$ such that,
\begin{equation*}
\label{eq:poincare}
\int f^2 \, \rho \ud x \leq  \frac{1}{\lambda_1} \int |\nabla f|^2 \, \rho \ud x ,\quad \forall f \in H^1_0.
\end{equation*}
The Poincar\'e inequality implies the existence and uniqueness of a
weak solution to the weighted Poisson equation.  

\medskip

\begin{theorem}{[Theorem~2.2 in~\cite{laugesen15}].}
Assume (A1)-(A2).  Then there exists a unique weak solution
$\phi \in H_0^1(\R^d;\rho)$ satisfying
\eqref{eq:weakform}. Moreover, the gain function $\k=\nabla\phi$ is controlled by the size of the data:
\begin{equation*}
\int |\k|^2 \;\rho \ud x  \le \frac{1}{\lambda_1} \int |h-\hah|^2 \;\rho \ud x.
\end{equation*}
\qed
\end{theorem}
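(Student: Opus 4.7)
The plan is to apply the Lax--Milgram theorem on the Hilbert space $H^1_0$, with the Poincar\'e inequality (already granted by A1 via the Bakry--\'Emery criterion cited) furnishing both an equivalent inner product and continuity of the load. First I would equip $H^1_0$ with the inner product $\langle \phi,\psi\rangle := \int \nabla\phi \cdot \nabla\psi\,\rho\,\ud x$. On the zero-mean subspace, Poincar\'e gives $\|\phi\|_2^2 \le \lambda_1^{-1}\|\nabla\phi\|_2^2$, so this inner product is equivalent to the standard $H^1$ norm restricted to $H^1_0$, making it a Hilbert space.

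Next, define the bilinear form $a(\phi,\psi) := \int \nabla\phi\cdot \nabla\psi\,\rho\,\ud x$ and the linear functional $\ell(\psi) := \int (h-\hah)\psi\,\rho\,\ud x$. Continuity and coercivity of $a$ are immediate from Cauchy--Schwarz with constant $1$ in the chosen inner product. For continuity of $\ell$, Assumption A2 ensures $h-\hah \in L^2$, and Cauchy--Schwarz combined with Poincar\'e gives $|\ell(\psi)| \le \|h-\hah\|_2\,\|\psi\|_2 \le \lambda_1^{-1/2}\|h-\hah\|_2\,\|\nabla\psi\|_2$. Lax--Milgram then produces a unique $\phi \in H^1_0$ satisfying~\eqref{eq:weakform} for every test function in $H^1_0$. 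To promote this to arbitrary $\psi \in H^1$, I would decompose $\psi = \psi_0 + c$ with $c := \int \psi\rho\,\ud x$ and $\psi_0 \in H^1_0$: the left-hand side is invariant under constants because $\nabla c = 0$, and the right-hand side is invariant because $\int (h-\hah)\rho\,\ud x = 0$ by construction.

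For the energy estimate, I would test the weak form against $\psi = \phi$ itself to obtain
\begin{equation*}
\int |\k|^2 \rho\,\ud x = \int(h-\hah)\phi\,\rho\,\ud x \le \|h-\hah\|_2\,\|\phi\|_2,
\end{equation*}
and then apply Poincar\'e to $\phi \in H^1_0$ to get $\|\phi\|_2 \le \lambda_1^{-1/2}\|\k\|_2$. Substituting and cancelling one factor of $\|\k\|_2$ yields $\|\k\|_2 \le \lambda_1^{-1/2}\|h-\hah\|_2$, which squared is exactly the stated inequality.

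The only genuine obstacle is the Poincar\'e inequality itself, and that is handed to us by A1 through the cited result in Bakry--Gentil--Ledoux; everything downstream is a textbook Lax--Milgram exercise plus one integration-by-parts-style test against the solution. No additional regularity theory or spectral machinery is required, since the statement is about weak solutions in $H^1_0$ rather than classical solutions.
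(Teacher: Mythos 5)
Your proof is correct and follows exactly the route the paper intends: it cites Theorem~2.2 of \cite{laugesen15} and notes only that the Poincar\'e inequality (guaranteed by A1) is what drives existence and uniqueness, which is precisely the Lax--Milgram/Riesz argument on $H^1_0$ with the gradient inner product that you carry out, and your energy estimate from testing with $\phi$ and cancelling one factor of $\|\k\|_2$ is the standard derivation of the stated bound. No discrepancies to report.
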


There are two special cases where the exact solution can be found:
\begin{romannum}
\item Scalar case where the state dimension $d=1$;
\item Gaussian case where the density $\rho$ is a Gaussian.
\end{romannum}
The results for these two special cases appear in the following two
subsections. 

\subsection{Exact Solution in the Scalar Case}

In the scalar case (where $d=1$), the Poisson equation is:
\begin{equation*}
-\frac{1}{\rho(x)}\frac{\ud }{\ud x}(\rho(x) \frac{\ud \phi}{\ud
  x}(x)) = h-\hah.
\end{equation*}
Integrating twice yields the solution explicitly,
\begin{equation}
\begin{aligned}
\k(x) = \frac{\ud \phi}{\ud x}(x) &= -\frac{1}{\rho(x)}\int_{-\infty}^x
\rho(z)(h(z)-\hah)\ud z.
\end{aligned}
\label{eq:scalar}
\end{equation}

For the particular choice of $\rho$ as the sum of two Gaussians
${\cal N}(-1,\sigma^2)$ and ${\cal N}(+1,\sigma^2)$ with $\sigma^2=0.2$ and $h(x)=x$, the solution
obtained using~\eqref{eq:scalar} is depicted in \Fig{fig:truesol}.

\begin{figure}
\centering
\includegraphics[width=\columnwidth]{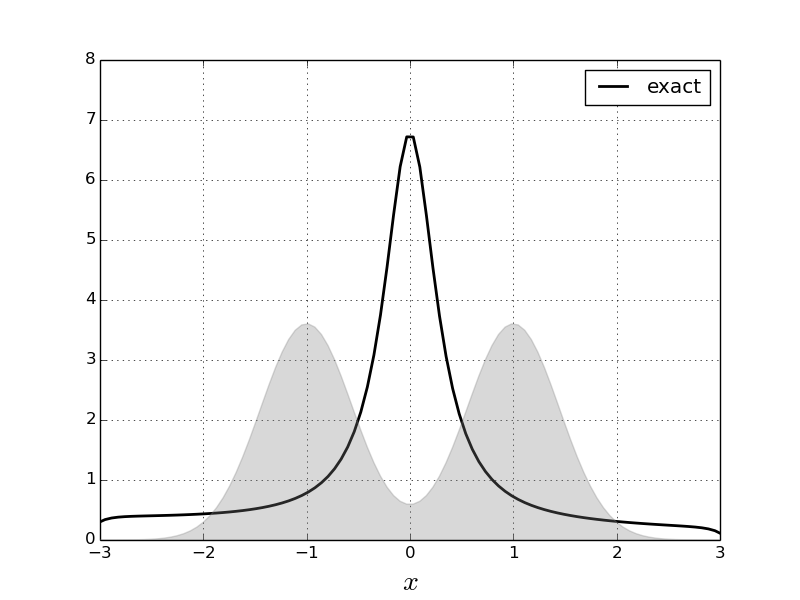}
\vspace*{-0.2in}
\caption{The exact solution to the Poisson equation using the
  formula~\eqref{eq:scalar}.  The density $\rho$ is the sum of two Gaussians
$N(-1,\sigma^2)$ and $N(+1,\sigma^2)$, and $h(x)=x$.  The density is depicted as the shaded curve in the background.}
\vspace*{-0.1in}
\label{fig:truesol}
\end{figure}

\subsection{Exact Spectral Solution for the Gaussian Density} 

Under Assumption (A1), the spectrum is known to be discrete with an ordered sequence of
eigenvalues $0=\lambda_0<\lambda_1\le\lambda_2\le\hdots$ and
associated eigenfunctions $\{e_n\}$ that form a complete orthonormal
basis of $L^2$ [Corollary~4.10.9 in ~\cite{bakry2013}].  The
trivial eigenvalue $\lambda_0=0$ with associated eigenfunction $e_0=1$.
On the subspace of zero-mean functions, the spectral decomposition
yields: For $\phi\in L^2_0$,
\begin{equation*}
\label{eq:spectral_rep}
-\Delta_\rho \phi = \sum_{m=1}^\infty \lambda_m <e_m,\phi>e_m.
\end{equation*}
The spectral gap condition~\eqref{eq:poincare} implies that
$\lambda_1>0$.

The spectral representation~\eqref{eq:spectral_rep} yields the
following closed-form solution of the PDE~\eqref{eqn:EL_phi_prelim}:   
\begin{equation*}
\phi = \sum_{m=1}^N \frac{1}{\lambda_m}<e_m,h-\hah>e_m.
\label{eq:spectralSolution}
\end{equation*}

The spectral representation formula~\eqref{eq:spectralSolution} is
used to obtain the exact solution for the Gaussian case where the eigenvalues and the
eigenfunctions are explicitly known in terms of Hermite polynomials. 

\medskip

\begin{definition}
The {\em Hermite polynomials} are recursively defined as
\begin{equation*}
\hslash_{n+1}(x) = 2 \, \hslash_{n}(x) - \hslash'_n(x),\quad \hslash_0(x) = 1,
\end{equation*} 
where the prime $'$ denotes the derivative.
\qed
\end{definition} 

\medskip

\begin{proposition} Suppose the density $\rho$ is
  Gaussian ${\cal }(\mu,\Sigma)$ where the mean $\mu\in \Re^d$ and the
  covariance $\Sigma$ is assumed to be a
  strictly positive definite symmetric matrix.  Express 
  $\Sigma = VDV^T$ where $D=\text{diag}(\sigma_1^2,\ldots,\sigma_d^2)$
  and $V = [V_1|\hdots|V_j|\hdots|V_d]$ is an orthonormal matrix with
  the $j^{\text{th}}$ column denoted as $V_j \in \Re^d$.  For
  $n=(n_1,\ldots,n_d) \in \mathbb{Z}_+^d$, 
\begin{romannum}
\item The eigenvalues are,
\begin{equation*}
\lambda_n = \sum_{j=1}^d\frac{n_j}{\sigma_j^2}.
\end{equation*} 
\item The corresponding eigenfunctions are,
\begin{equation*}
e_n (x) = \prod_{j=1}^d \hslash_{n_j}(\frac{V_j \cdot (x-\mu)}{\sigma_j}),
\end{equation*}
where $\hslash_{n_j}$ is the Hermite polynomial.
\end{romannum}
\end{proposition}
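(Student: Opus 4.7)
The plan is to reduce the general Gaussian case to a product of one-dimensional Gaussian cases by an orthogonal change of variables, and then to verify the eigenvalue equation coordinate-wise using properties of Hermite polynomials.

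First I would diagonalize. Since $\Sigma = VDV^\transpose$ with $V$ orthogonal, introduce the change of variables $y = V^\transpose(x-\mu)$ and write $\tilde\phi(y) := \phi(\mu + Vy)$. Because $V$ is orthogonal, $\nabla_x\phi = V\nabla_y\tilde\phi$ and $\Delta_x\phi = \Delta_y\tilde\phi$, so the weighted Laplacian $\Delta_\rho$ is intertwined with $\Delta_{\tilde\rho}$, where $\tilde\rho(y) = \prod_{j=1}^d \tilde\rho_j(y_j)$ and each $\tilde\rho_j$ is a centered scalar Gaussian with variance $\sigma_j^2$. The chain rule then gives
\begin{equation*}
\Delta_{\tilde\rho} = \sum_{j=1}^d \Delta^{(j)}_{\tilde\rho_j},\qquad \Delta^{(j)}_{\tilde\rho_j} = \partial_{y_j}^2 - \frac{y_j}{\sigma_j^2}\partial_{y_j},
\end{equation*}
i.e., $\Delta_{\tilde\rho}$ is a tensor-sum of commuting one-dimensional Ornstein--Uhlenbeck operators acting on $L^2(\Re^d,\tilde\rho) = \bigotimes_j L^2(\Re,\tilde\rho_j)$.

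By the standard tensorization principle for sums of commuting self-adjoint operators, it then suffices to establish the claim in one dimension: for a scalar Gaussian density of variance $\sigma^2$, the function $u \mapsto \hslash_n(u/\sigma)$ is an eigenfunction of $-\Delta_{\tilde\rho_j}$ with eigenvalue $n/\sigma^2$. Setting $\xi = y/\sigma$ rescales the operator to $(1/\sigma^2)(-\partial_\xi^2 + \xi\partial_\xi)$, so the required identity reduces to the classical Hermite differential equation $-\hslash_n'' + \xi \hslash_n' = n\hslash_n$, which I would derive directly from the recurrence in the paper's definition together with the companion relation $\hslash_n' = 2n\hslash_{n-1}$ (obtained by differentiating the recurrence and inducting on $n$). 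Combining indices, $e_n(x) = \prod_j \hslash_{n_j}(V_j\cdot(x-\mu)/\sigma_j)$ is an eigenfunction of $-\Delta_\rho$ with eigenvalue $\sum_j n_j/\sigma_j^2$, matching the stated formulas for $\lambda_n$ and $e_n$.

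Finally, to see that this list exhausts the spectrum, I would invoke the fact that $\{\hslash_n\}_{n\ge 0}$ is a complete orthogonal basis of the 1D Gaussian $L^2$-space; tensorizing across coordinates gives a complete orthogonal basis $\{e_n\}_{n\in\mathbb{Z}_+^d}$ of $L^2(\Re^d,\rho)$ consisting of eigenfunctions, so by self-adjointness of $-\Delta_\rho$ no other eigenvalues can occur. The only place that needs care is the justification of the tensor-product completeness and of differentiating under the expectation when verifying the eigenvalue equation on polynomial functions; both are routine given Assumption (A1), which in the Gaussian case provides the exponential moments that make Hermite polynomials lie in $H^1(\Re^d,\rho)$ and legitimize the formal manipulations above. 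I do not anticipate a substantive obstacle beyond bookkeeping of the change-of-variables factors.
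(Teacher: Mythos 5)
The paper states this proposition without proof, so there is no in-paper argument to compare yours against. Your route --- the orthogonal change of variables $y = V^T(x-\mu)$ to diagonalize $\Sigma$, the decomposition of $\Delta_\rho$ into a sum of commuting one-dimensional Ornstein--Uhlenbeck operators $\partial_{y_j}^2 - (y_j/\sigma_j^2)\partial_{y_j}$, the reduction to the Hermite eigenvalue equation by the scaling $\xi = y_j/\sigma_j$, and tensorized completeness plus self-adjointness to exhaust the spectrum --- is the standard argument and is structurally sound.

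The one concrete defect is in the Hermite bookkeeping, and it matters because it changes the scaling inside the stated eigenfunctions. The companion relation you invoke, $\hslash_n' = 2n\hslash_{n-1}$, together with the (corrected) recurrence $\hslash_{n+1}(x) = 2x\,\hslash_n(x) - \hslash_n'(x)$ --- note the paper's displayed recurrence is missing the factor $x$ and as written produces only constants --- pins down the physicists' normalization, whose differential equation is $\hslash_n'' - 2\xi\hslash_n' + 2n\hslash_n = 0$, not $-\hslash_n'' + \xi\hslash_n' = n\hslash_n$ as you wrote. Under that convention $\hslash_n(\xi)$ is \emph{not} an eigenfunction of $-\partial_\xi^2 + \xi\partial_\xi$: a direct substitution gives $-\hslash_n'' + \xi\hslash_n' = -\xi\hslash_n' + 2n\hslash_n$. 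The eigenfunctions of the variance-$\sigma^2$ operator in the physicists' convention are $\hslash_n\bigl(y/(\sigma\sqrt{2})\bigr)$, with eigenvalue $n/\sigma^2$. To land on the eigenfunctions exactly as displayed in the proposition, $\hslash_{n_j}\bigl(V_j\cdot(x-\mu)/\sigma_j\bigr)$, you need the probabilists' normalization throughout: $\hslash_{n+1}(x) = x\,\hslash_n(x) - \hslash_n'(x)$, $\hslash_n' = n\hslash_{n-1}$, and $-\hslash_n'' + \xi\hslash_n' = n\hslash_n$. Once the convention is fixed consistently (either choice works after rescaling the polynomial's argument), the rest of your argument goes through; in the completeness step you should also index over nonnegative multi-indices, since restricting to strictly positive $n_j$ would omit, for example, eigenfunctions depending on a single coordinate.
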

\medskip

An immediate corollary is that the first non-zero eigenvalue is
$\frac{1}{\sigma_{\text{max}}^2}$ and the corresponding eigenfunction
is $\psi(x)=V_{\text{max}} \cdot (x-\mu)$, where
$\sigma_{\text{max}}^2$ is the largest eigenvalue of the covariance
matrix and $V_{\text{max}}\in\Re^d$ is the corresponding eigenvector. 

\medskip

\begin{example} Suppose the density $\rho$ is a
  Gaussian ${\cal }(\mu,\Sigma)$.
\begin{romannum}
\item The observation function $h(x) =
  H \cdot x$, where $H \in \Re^d$.  Then, $\phi = \Sigma H
  \cdot x$ and the gain function $\k = \Sigma \, H$ is the Kalman
  gain.
 
\item Suppose $d=2$, $\mu = [0,0]$, $\Sigma=\begin{bmatrix} \sigma_1^2
    & 0 \\0&\sigma_2^2\end{bmatrix}$, and the observation function
  $h(x_1,x_2)=x_1\,x_2$.  Then,
\[
\phi(x) =
  \frac{1}{\frac{1}{\sigma_1^2}+\frac{1}{\sigma_2^2}}x_1x_2
\quad\text{and}\quad
\k(x_1,x_2) =
\frac{1}{\frac{1}{\sigma_1^2}+\frac{1}{\sigma_2^2}} \begin{bmatrix}
  x_2 \\ x_1\end{bmatrix}.
\]

\end{romannum}
\qed
\end{example}

\medskip

In the general non-Gaussian case, the solution is not known in an
explicit form and must be numerically approximated.  Note that even in
the two exact cases, one may need to numerically approximate the solution
because the density is not given in an explicit form.  A popular
choice is the constant gain approximation briefly described next.     

\begin{figure}[t]
\centering
\includegraphics[width=1.0\columnwidth]{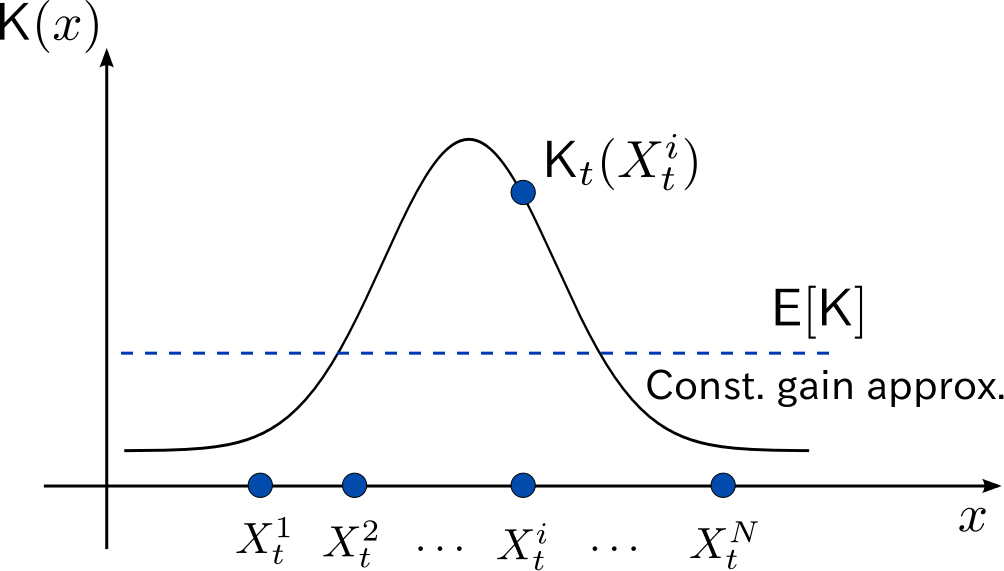}
\caption{Constant gain approximation:  Approximating nonlinear $\k$ by
its expected value ${\sf E}[\k]$.}
\vspace*{-0.2in}
\label{fig:const-gain-approx}
\end{figure} 

\subsection{Constant gain approximation}

The constant gain approximation is the best -- in the least-square sense -- constant approximation of the
gain function (see~\Fig{fig:const-gain-approx}).  Precisely, consider the following least-square
optimization problem:
\begin{equation*}
\kappa^\ast = \arg \min_{\kappa \in \Re^d} {\sf E}_\rho[|\k - \kappa|^2].
\end{equation*}
By using a standard sum of square argument, 
$
\kappa^\ast = {\sf E}_\rho[\k].
$
The expected value admits an explicit formula: In the
weak-form~\eqref{eq:weakform}, choose the test functions to be the
coordinate functions: $\psi_k(x) = x_k$ for
$k=1,2,\hdots,d$. Writing $\psi(x) = (\psi_1,\psi_2,\hdots,\psi_d)^T = x$,
\begin{align*}
\kappa^* = \Expect_\rho[\k] &= \Expect_\rho [  (h-\hah) \psi ]  = \int_{\Re^d} (h(x)-\hah)\;
x \; \rho(x) \ud x.
\end{align*}
On computing the integral using only the particles, one obtains the
formula for the gain function approximation:
\[
\k \approx \frac{1}{N}\sum_{i=1}^N\; (h(X^i)-\hat{h}^{(N)}) \; X^i,
\]
where $\hat{h}^{(N)} = N^{-1} \sum_{i=1}^N h(X^i)$. 
This formula is referred to as the {\em constant gain approximation}
of the gain function;
cf.,~\cite{yang2016}.  It is a popular choice in applications~\cite{yang2016,stano2014,tilton2013,berntorp2015}..

\section{Kernel-based Approximation} 
\label{sec:kernel}

\noindent \textbf{Semigroup:} The spectral gap condition~\eqref{eq:poincare} implies that
$\lambda_1>0$.  Consequently, the semigroup
\begin{equation}
e^{t\Delta_\rho}\phi := \sum_{m=1}^{\infty} e^{-t\lambda_m}<e_m,\phi>e_m
\label{eq:semigroup}
\end{equation} 
is a strict contraction on the subspace $L^2_0$.  It is also easy to see that
$\mu$ is an invariant measure and $\int e^{t\Delta_\rho}\phi(x)
\ud\mu(x) = \int \phi(x) \ud\mu(x) = 0$ for all $\phi\in L^2_0$. 

\medskip

The semigroup
formula~\eqref{eq:semigroup} is used to obtain the solution of the
Poisson equation~\eqref{eqn:EL_phi_prelim} by solving the following fixed-point equation for
{\em any} fixed positive value of $t$:
\begin{equation}
\phi = e^{t\Delta_\rho} \phi+ \int_0^t e^{s\Delta_\rho} (h-\hat{h}) \ud s.
\label{eq:fixed1}
\end{equation}
A unique solution exists because $e^{t\Delta_\rho}$ is a contraction
on $L^2_0$.  

\medskip

\noindent \textbf{Kernel-based method:}
In the kernel-based algorithm, one approximates the solution of the
fixed point problem~\eqref{eq:fixed1} by approximating the semigroup
by an integral operator for $t=\epsilon$.  The approximation,
introduced in~\cite{amirCDC2016}, has three main steps:
\begin{align}
\text{Exact}:&~&\phi &= e^{\epsilon\Delta_\rho} \phi+ \int_0^\epsilon
e^{s\Delta_\rho} (h-\hah) \ud s \label{eq:fixedPt}\\
\text{Kernel approx:}&~ &\phieps &= \Teps \phieps+ \int_0^\epsilon T_s
(h-\hah)  \ud s \label{eq:fixed-point-eps}\\
\text{Empirical approx:}&~ &\phiepsN &= \TepsN \phiepsN +
\int_0^{\epsilon}T_s^{(N)} (h-\hah)  \ud s
\label{eq:fixed-point-epsN}
\end{align}
The justification for these steps is as follows:
\begin{romannum}
\item A solution of the Poisson equation~\eqref{eqn:EL_phi_prelim} is also a solution of the
  fixed-point problem~\eqref{eq:fixedPt}  where $\epsilon>0$ is
  arbitrary. A unique solution exists because $e^{\epsilon\Delta_\rho} $ is
  contraction on $L^2_0$.
\medskip
\item The Kernel approximation~\eqref{eq:fixed-point-eps} involves approximating
  the semigroup $e^{\epsilon\Delta_\rho}$ by an integral operator
  $\Teps$,
\begin{equation}
\Teps f := \int_{\Re^d}k_\epsilon(x,y)f(y)\rho(y)\ud y
\label{eq:Teps}
\end{equation} 
where the exact form of $\keps$ appears in the Appendix, where it is
also shown that $e^{\epsilon\Delta_\rho}  \approx \Teps$ as $\epsilon
\downarrow 0$.  The approximation of the semigroup by the integral
operator appears in~\cite{coifman,hein2006}.  

\medskip
\item The empirical approximation~\eqref{eq:fixed-point-epsN} involves approximating the integral
  operator empirically in terms of the particles,
\begin{equation}
\TepsN f(x) := \frac{1}{N}\sum_{i=1}^N \kepsN(x,X^i)f(X^i),
\label{eq:TepsN}
\end{equation} 
justified by the LLN.  

\end{romannum}

\medskip

The gain $\vepsN$ is computed by taking the gradient of the
fixed-point equation
\eqref{eq:fixed-point-epsN}.  For this purpose, denote,
\begin{equation}
\begin{aligned}
\nabla \TepsN f (x) &:= \frac{1}{N}\sum_{j=1}^N \nabla\kepsN(x,X^j)f(X^j) \\
&= \frac{1}{2\epsilon} \bigg[\frac{1}{N}\sum_{i=1}^N \kepsN(x,X^i)X^if(X^i) \\- &\left(\frac{1}{N}\sum_{i=1}^N \kepsN(x,X^i)X^i\right)\left(\frac{1}{N}\sum_{i=1}^N \kepsN(x,X^i)f(X^i)\right)\bigg].
\end{aligned}
\end{equation}
Next, two approximate formulae for $\vepsN$ are presented based on two different approximations of the integral $\int_0^\epsilon
T_s^{(N)}(h-\hat{h}) \ud s$:

\newP{Approximation 1} The integral term is approximated by $\epsilon
(h-\hat{h})$ and the resulting formula for the gain is,
\begin{flalign}
&\text{(G1)}&\quad \KepsN(x)&:= \nabla \TepsN \phiepsN(x)  + \epsilon\nabla h(x).&
\end{flalign}

\medskip

By approximating the integral term differently, one can avoid the
need to take a derivative of $h$.

\medskip 

\newP{Approximation 2} The integral term is approximated by $\TepsN
(h-\hat{h})$.  The resulting formula for the gain is,
\begin{flalign}
&(\text{G2})&\quad \KepsN(x)&:=\nabla \TepsN \phiepsN(x)  + \epsilon\nabla \TepsN (h-\hat{h})(x).&
\label{eq:kepsN-approx}
\end{flalign}

\medskip

\begin{remark} Although $\nabla\TepsN$ and $\KepsN$ are
ultimately important in the numerical algorithm (described next), it
is useful to introduce the limiting (as $N\rightarrow\infty$)
variables $\nabla\Teps$ and $\Keps$.  The operator $\nabla\Teps$ is
defined as follows:
\begin{equation}
\begin{aligned}
\nabla \Teps f(x) &= \int_{\Re^d} \nabla_x\keps(x,y) f(y)\pr(y)\ud y \\
&= \frac{1}{2\epsilon}\bigg[\Teps(ef)-\Teps(e)\Teps(f)\bigg](x)
\end{aligned}
\end{equation}
where $e$ is the identity function $e(x)=x$.

In terms of $\nabla\Teps$, the gain function $\Keps$ is defined by
taking of the gradient of the fixed-point
equation~\eqref{eq:fixed-point-eps}.  This leads to the limiting
counterpart of the approximation (G1) and (G2).  In particular,
analogous to (G2),
\[
\Keps(x):=\nabla \Teps \phieps(x)  + \epsilon\nabla \Teps (h-\hat{h})(x),
\]
where $\phieps$ is the solution of the fixed-point equation~\eqref{eq:fixed-point-eps}.
\end{remark}

\medskip

\noindent \textbf{Numerical Algorithm:} A numerical implementation
involves the following steps:
\begin{romannum}
\item Assemble a $N\times N$ Markov matrix to approximate the finite rank operator
  $\TepsN$ in~\eqref{eq:TepsN}.  The $(i,j)$-entry of the matrix is
  given by,
\[
\Tm_{ij}= \frac{1}{N}\sum_{j=1}^N \kepsN(X^i,X^j).
\]

\item Use the method of successive approximation to solve the
  discrete counterpart of the fixed-point equation~\eqref{eq:fixed-point-epsN},
\begin{equation}
\Phi = \Tm \Phi + \epsilon (\hm-\hat{h}^{(N)})
\label{eq:fixed-point-matrix}
\end{equation} 
where $\Phi:=[\phiepsN(X^1),\ldots,\phiepsN (X^N)] \in \Re^N$ is the
(unknown) solution, $\hm:=[h(X^1),\ldots,h(X^N)]
\in \Re^N$ is given, and $\hat{h}^{(N)} = \frac{1}{N}\sum_{i=1}^N
h(X^i)$.  In filtering applications, the solution from the previous
time-step is typically used to initialize the algorithm.


\item Once $\Phi$ has been computed, the gain function
  $\{\k(X^1),\hdots,\k(X^i),\hdots,\k(X^N)\}$ is obtained by using
  either (G1) or (G2).  Note that the discrete counterpart of $\nabla
  \TepsN$ is obtained using the Markov matrix $\Tm$.  
\end{romannum}

\medskip

The overall algorithm is tabulated as Algorithm 1 where (G2) is
used for the gain function approximation. 
\begin{algorithm}
\caption{Kernel-based gain function approximation}
\begin{algorithmic}
\REQUIRE $\{X^i\}_{i=1}^N$, $H:=\{h(X^i)\}_{i=1}^N$,$\Phi_0:=\{\phi_0(X^i)\}_{i=1}^N$ 
\ENSURE $\Phi:=\{\phi(X^i)\}_{i=1}^N$, $\{\nabla \phi(X^i)\}_{i=1}^N$ \medskip
\STATE Calculate $g_{ij}:=\exp(-|X^i-X^j|^2/4\epsilon)$ for $i,j=1$ to $N$.\medskip
\STATE Calculate $k_{ij}:=\frac{g_{ij}}{\sqrt{\sum_l g_{il}}\sqrt{\sum_l g_{jl}}}$ for $i,j=1$ to $N$.
\STATE Calculate $T_{ij}:=\frac{k_{ij}}{\sum_l k_{il}}$ for $i,j=1$ to $N$.
\STATE Calculate $\hat{h}^{(N)}=\frac{1}{N}\sum_{i=1}^N H_i$. \medskip
\FOR {$t=1$ to  $T$}
\STATE Solve $\Phi_{t}= T \Phi_{t-1} + \epsilon (H-\hat{h})$. \medskip
\STATE $\Phi_{t} = \Phi_t - \frac{1}{N}\sum_{i=1}^N \Phi_{t,i}$
\ENDFOR
\STATE Calculate 
\[
\k (X^i)= \frac{1}{2\epsilon}\sum_{j=1}^N \left[T_{ij}(\Phi_j+\epsilon(H_j - \hat{h}))\left(X^j- \sum_{k=1}^N T_{ik}X^k\right)\right]
\]
\end{algorithmic}
\label{alg:kernel}
\end{algorithm}

\section{Error Analysis}
\label{sec:analysis}

The objective is to characterize the approximation error
$\expect[\|\k_\epsilon^{(N)} - \k\|_2]$.  Using the triangle inequality,
\begin{equation}
\expect[\|\k_\epsilon^{(N)} - \k\|_2] \;\; \leq \;\;
\underbrace{\expect[\|\k_\epsilon^{(N)} -
  \k_\epsilon\|_2]}_{\text{Variance}} \; +\; \underbrace{\|\k_\epsilon - \k\|_2}_{\text{Bias}},
\end{equation}
where $\k=\nabla\phi$ denotes the exact gain function, and 
$\k_\epsilon(x)=\nabla\phi_{\epsilon}(x)$ and
$\k_\epsilon^{(N)}(x)=\nabla\phi_{\epsilon}^{(N)}(x)$ are defined by
taking the gradient of the fixed-point
equation~\eqref{eq:fixed-point-eps} and~\eqref{eq:fixed-point-epsN},
respectively.  


The following Theorem provides error estimates for the gain function
in the asymptotic limit as $\epsilon \downarrow
0$ and $N \to \infty$.
These estimates apply to either of the two approximations, (G1) or
(G2), used to obtain the gain function.  A sketch of the proof appears
in the Appendix.  

\medskip

\begin{theorem} 
Suppose the assumptions (A1)-(A2) hold for the density $\rho$ and the
function $h$, with spectral gap constant $\lambda_1$.  Then 
\begin{enumerate}
\item (Bias) In the asymptotic limit as $\epsilon \downarrow 0$, 
\begin{equation}
\|\k_\epsilon - \k\|_2 \; \le \; C\epsilon  + \text{h.o.t.},
\end{equation}
\item(Variance) In the asymptotic limit as $\epsilon \downarrow 0$ and $N \to \infty$,
\begin{equation}
\begin{aligned}
\expect[\|\k_\epsilon - \k_\epsilon^{(N)}\|_2]&\; \leq \;
\frac{C}{N^{1/2}\epsilon^{1+d/4}} + \text{h.o.t.},
\end{aligned}
\end{equation}
where the constant $C$ depends upon the function $h$.
\end{enumerate}
\label{thm:error-bound}
\end{theorem}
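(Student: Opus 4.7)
The plan is to bound the bias and variance terms separately, in both cases exploiting the contraction property of the relevant fixed-point operator together with the spectral gap of $\Delta_\rho$ on $L^2_0$ provided by Assumption (A1).

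For the bias bound, I would subtract the exact fixed-point equation \eqref{eq:fixedPt} from its kernel-approximate version \eqref{eq:fixed-point-eps}. Setting $\Delta\phi := \phi - \phi_\epsilon$, a rearrangement yields
\begin{equation*}
(I - e^{\epsilon\Delta_\rho})\Delta\phi \;=\; (e^{\epsilon\Delta_\rho} - T_\epsilon)\phi_\epsilon \;+\; \int_0^\epsilon (e^{s\Delta_\rho} - T_s)(h-\hat{h})\,ds.
\end{equation*}
The spectral gap gives $\|(I - e^{\epsilon\Delta_\rho})^{-1}\|_{L^2_0 \to L^2_0} = (1 - e^{-\epsilon\lambda_1})^{-1} = O(\epsilon^{-1})$. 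The key analytical input, borrowed from the diffusion-maps literature \cite{coifman,hein2006} and recorded in the Appendix, is the pointwise expansion $T_\epsilon f = f + \epsilon\Delta_\rho f + O(\epsilon^2)$, which matches the corresponding semigroup expansion $e^{\epsilon\Delta_\rho} f = f + \epsilon\Delta_\rho f + O(\epsilon^2)$ at first order. Consequently the right-hand side of the display is $O(\epsilon^2)$ in $L^2$, giving $\|\Delta\phi\|_2 = O(\epsilon)$. A weighted elliptic-regularity argument, obtained by testing the weak form \eqref{eq:weakform} for the residual against $\Delta\phi$ itself, then promotes the $L^2$ bound to the gradient bound $\|\k_\epsilon - \k\|_2 = O(\epsilon)$.

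For the variance bound, the starting point is the classical kernel-estimator variance
\begin{equation*}
\expect\|T_\epsilon^{(N)} f - T_\epsilon f\|_2^2 \;=\; O\!\left(\frac{1}{N\epsilon^{d/2}}\right),
\end{equation*}
which follows because the unnormalized Gaussian weight $e^{-|x-y|^2/4\epsilon}$ has squared $L^2(\rho)$-norm of order $\epsilon^{-d/2}$ in $y$, so each term in the empirical sum \eqref{eq:TepsN} contributes variance of that order. Subtracting \eqref{eq:fixed-point-eps} from \eqref{eq:fixed-point-epsN} and inverting $(I - T_\epsilon^{(N)})$, which supplies an additional factor $\epsilon^{-1}$ by the same spectral-gap argument as in the bias step, gives $\expect\|\phi_\epsilon^{(N)} - \phi_\epsilon\|_2 = O(N^{-1/2}\epsilon^{-1-d/4})$. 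Expanding $\k_\epsilon^{(N)} - \k_\epsilon$ via the explicit formula (G2) and observing that the $(2\epsilon)^{-1}$ prefactor in $\nabla T_\epsilon^{(N)}$ is absorbed by the leading-order cancellation between $T_\epsilon^{(N)}(ef)$ and $T_\epsilon^{(N)}(e)T_\epsilon^{(N)}(f)$, i.e.\ $\nabla T_\epsilon^{(N)}$ has the same $O(N^{-1/2}\epsilon^{-1-d/4})$ variance scaling as $T_\epsilon^{(N)}$ itself divided by $\epsilon$, a careful accounting of the two contributions yields the claimed rate.

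The main obstacle will be the ratio structure introduced by the Coifman-Lafon normalization: $T_\epsilon^{(N)}$ is a quotient of two empirical sums, so the Monte Carlo CLT cannot be applied to it directly. The remedy is to Taylor-expand the normalizing denominator around its expectation and show that the nonlinear remainder is absorbed into the h.o.t.\ terms; this in turn requires uniform control of the denominator away from zero, which follows from standard concentration estimates for kernel density estimators under Assumption (A1). A secondary technical issue is the transfer of the spectral gap from $e^{\epsilon\Delta_\rho}$ to $T_\epsilon$ and then to $T_\epsilon^{(N)}$: the finite-sample operator is only approximately a contraction, so the operator-norm bound on $(I - T_\epsilon^{(N)})^{-1}$ holds only on an event of high probability, which must be carried through the expectations above.
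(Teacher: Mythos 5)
Your proposal follows essentially the same route as the paper's proof sketch: the bias is obtained by combining the first-order consistency $T_\epsilon f = f + \epsilon \Delta_\rho f + O(\epsilon^2)$ (the paper's Lemma~\ref{lem:Teps}) with an $O(1/(\epsilon\lambda_1))$ bound on the inverse of the fixed-point operator, and the variance by combining the Monte Carlo estimate $\expect[\|T_\epsilon^{(N)} f - T_\epsilon f\|_2^2] = O(1/(N\epsilon^{d/2}))$ (Lemma~\ref{lem:TepsN}) with the same resolvent scaling, exactly as in the Appendix. The one substantive difference is that the paper states the resolvent estimate $\|(I-T_\epsilon)^{-1}\|_{H^1_0(\rho)} = (\epsilon\lambda_1)^{-1} + O(1)$ directly in $H^1_0(\rho)$, so the bound on the gain $\k_\epsilon - \k = \nabla(\phi_\epsilon - \phi)$ is immediate once the residual is controlled in that norm; your detour through $L^2$ followed by an elliptic-regularity ``promotion'' is the weakest link of the argument, because the quadratic form of $I - e^{\epsilon\Delta_\rho}$ does not dominate $\epsilon\|\nabla\varble\|_2^2$ uniformly over high-frequency modes (for an eigenfunction $e_m$ with $\epsilon\lambda_m \gg 1$ the form is $O(1)$ while $\epsilon\lambda_m \to \infty$), so testing the residual equation against $\phi - \phi_\epsilon$ does not by itself yield the gradient bound. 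Running the entire contraction argument in $H^1_0(\rho)$, as the paper does, avoids this issue; your treatment of the normalization denominator and of the high-probability contraction of $T_\epsilon^{(N)}$ is, if anything, more explicit than the paper's own sketch.
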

\begin{figure}[t]
\centering
\includegraphics[width=0.8\columnwidth]{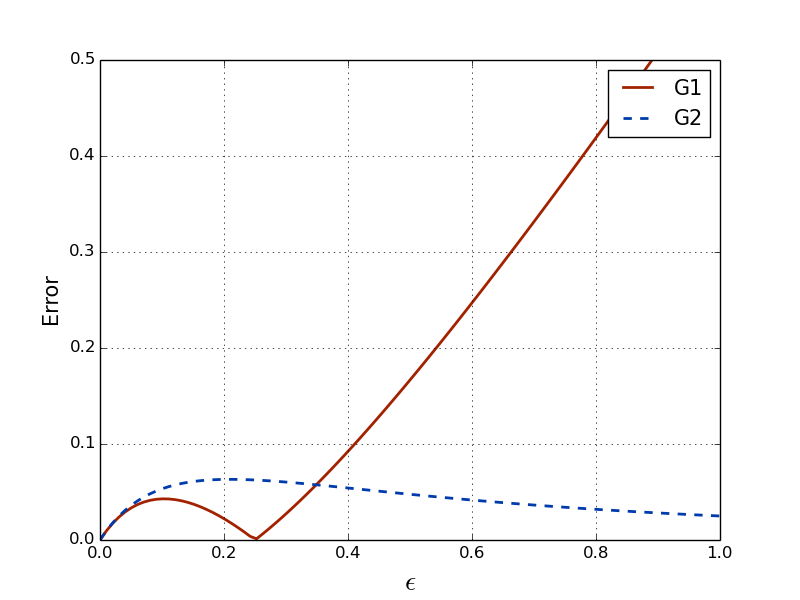}
\caption{The figure shows explicit error $\|\k_\epsilon - \k\|_2$}
\label{fig:eps-error-explicit}
\end{figure} 
\subsection{Difference between (G1) and (G2)}

In the asymptotic limit as $\epsilon\downarrow 0$, the two
approximations (G1) and (G2) yield identical error estimates.  The difference arises
as $\epsilon$ becomes larger.  The following Proposition provides
explicit error estimates for the bias in the special linear Gaussian case.

\medskip

\begin{proposition}
Suppose the density $\rho$ is a Gaussian $N(0,\sigma^2I)$ and
$h(x)=H\cdot x$.  Then the bias for the two approximations is given by
the following closed-form formula: 
\begin{align}
\text{Bias for (G1):} \quad \|\k_\epsilon - \k\|_2 &= \epsilon \frac{\sigma^2 - 4\epsilon}{\sigma^2 + 4\epsilon} |H|, \\
\text{Bias for (G2):} \quad \|\k_\epsilon - \k\|_2 &= \epsilon \frac{\sigma^6}{(\sigma^2+4\epsilon)(\sigma^4+3\epsilon\sigma^2 + 4\epsilon^2)} |H|. 
\end{align}
\label{prop:explicit-error}
\end{proposition}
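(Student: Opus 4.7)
My plan is to reduce the bias computation to a scalar calculation by exploiting the linearity of $h$ and the rotational symmetry of the centered Gaussian $\rho$. The starting observation is that, by symmetry, $T_\epsilon$ maps linear functions of $x$ to linear functions of $x$: for $h(x)=H\cdot x$, we must have $T_\epsilon(H\cdot y)(x)=\alpha(\epsilon)\,H\cdot x$ for some scalar $\alpha(\epsilon)$. I would compute $\alpha(\epsilon)$ by direct Gaussian integration using the kernel definition from the Appendix. The diffusion-map normalization $k_\epsilon(x,y)=g_\epsilon(x,y)/\sqrt{m_\epsilon(x)m_\epsilon(y)}$, with $m_\epsilon(x)=\int g_\epsilon(x,y)\rho(y)\,dy$, cancels the $1/\sqrt{m_\epsilon(x)}$ in the ratio defining $T_\epsilon f(x)$, and the remaining weight $\rho(y)/\sqrt{m_\epsilon(y)}$ is again Gaussian of variance $\tilde\sigma^2=2\sigma^2(\sigma^2+2\epsilon)/(\sigma^2+4\epsilon)$. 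Completing the square in $y$ then yields
\[
\alpha(\epsilon) \;=\; \frac{\tilde\sigma^2}{\tilde\sigma^2+2\epsilon} \;=\; \frac{\sigma^2(\sigma^2+2\epsilon)}{\sigma^4+3\sigma^2\epsilon+4\epsilon^2}.
\]

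Next I would plug the linear ansatz $\phi_\epsilon(x)=c(\epsilon)\,H\cdot x$ into the fixed-point equation actually solved by Algorithm~\ref{alg:kernel}, namely $\phi_\epsilon=T_\epsilon\phi_\epsilon+\epsilon(h-\hat h)$ (noting $\hat h=0$ since $\rho$ is centered), to obtain in closed form $c(\epsilon)=\epsilon/(1-\alpha(\epsilon))$. Applying the two gain formulas to this common $\phi_\epsilon$, and using that on linear functions $\nabla T_\epsilon$ acts as multiplication of the gradient by $\alpha(\epsilon)$, gives
\[
\text{(G1):}\;\;\k_\epsilon \;=\; \bigl(\alpha(\epsilon)\,c(\epsilon)+\epsilon\bigr)H \;=\; \frac{\epsilon}{1-\alpha(\epsilon)}\,H,
\qquad
\text{(G2):}\;\;\k_\epsilon \;=\; \alpha(\epsilon)\bigl(c(\epsilon)+\epsilon\bigr)H \;=\; \frac{\alpha(\epsilon)\,\epsilon\,(2-\alpha(\epsilon))}{1-\alpha(\epsilon)}\,H.
\]
Since both $\k_\epsilon$ and the exact gain $\k=\sigma^2 H$ (from the Example) are constant vectors on $\R^d$, the $L^2(\rho)$-norm $\|\k_\epsilon-\k\|_2$ collapses to the ordinary Euclidean magnitude, so the bias is $\bigl|\k_\epsilon/|H|-\sigma^2\bigr|\cdot|H|$.

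The final step is then pure bookkeeping with rational functions in $\sigma^2$ and $\epsilon$. Using $1-\alpha(\epsilon)=\epsilon(\sigma^2+4\epsilon)/(\sigma^4+3\sigma^2\epsilon+4\epsilon^2)$, the (G1) bias reduces immediately to $\epsilon\,|\sigma^2-4\epsilon|/(\sigma^2+4\epsilon)\,|H|$, which matches the claimed formula for $\epsilon\le\sigma^2/4$. For (G2), writing the error with common denominator $D(\sigma^2+4\epsilon)$ where $D=\sigma^4+3\sigma^2\epsilon+4\epsilon^2$, the numerator $\sigma^2(\sigma^2+2\epsilon)(\sigma^4+4\sigma^2\epsilon+8\epsilon^2)-\sigma^2 D(\sigma^2+4\epsilon)$ is the subtraction of two degree-six polynomials whose $\sigma^6$, $\sigma^2\epsilon^2$, and $\epsilon^3$ terms cancel, leaving exactly $-\sigma^6\epsilon$; this yields the stated (G2) formula. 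The main obstacle will be the Gaussian integration in the first step---specifically, correctly tracking the diffusion-map normalization factor $1/\sqrt{m_\epsilon}$ so that the effective variance $\tilde\sigma^2$, and hence $\alpha(\epsilon)$, come out right; everything downstream is elementary algebra.
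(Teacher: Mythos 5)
Your proposal is correct and follows essentially the same route as the paper's own proof: compute the explicit Gaussian action of $T_\epsilon$ by completing the square (your $\alpha(\epsilon)$ is exactly the paper's $1-\delta_\epsilon$, and your $c(\epsilon)=\epsilon/\delta_\epsilon$ is the paper's explicit $\phi_\epsilon$), then apply (G1)/(G2) to the linear ansatz and compare against the Kalman gain $\sigma^2 H$. The only (harmless) refinements are your explicit absolute value in the (G1) bias for $\epsilon>\sigma^2/4$ and the sign of the (G2) correction term, neither of which affects the stated norms.
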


\begin{figure*}[t]
\centering
\begin{tabular}{cc}
\subfigure[]{
\includegraphics[width=1.0\columnwidth]{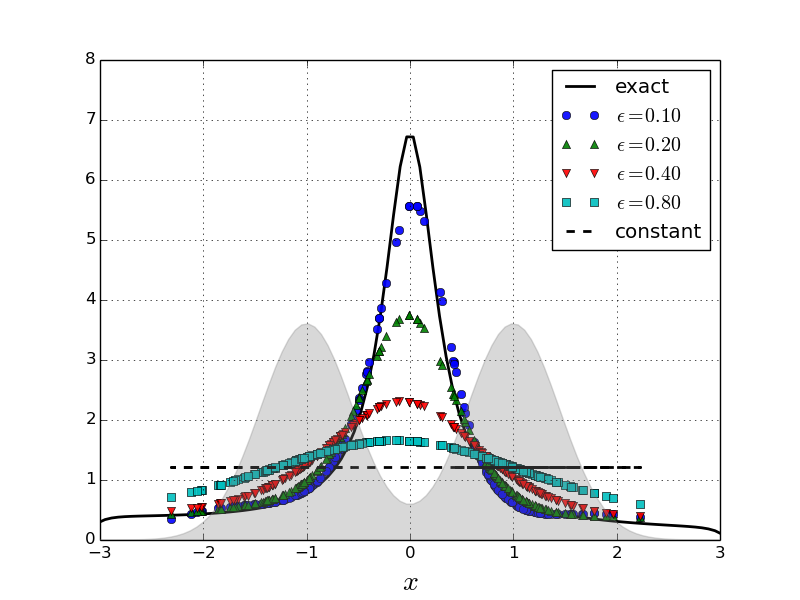}
} \label{fig:gain-bomodal-lin-1}&
\subfigure[]{
\includegraphics[width=1.0\columnwidth]{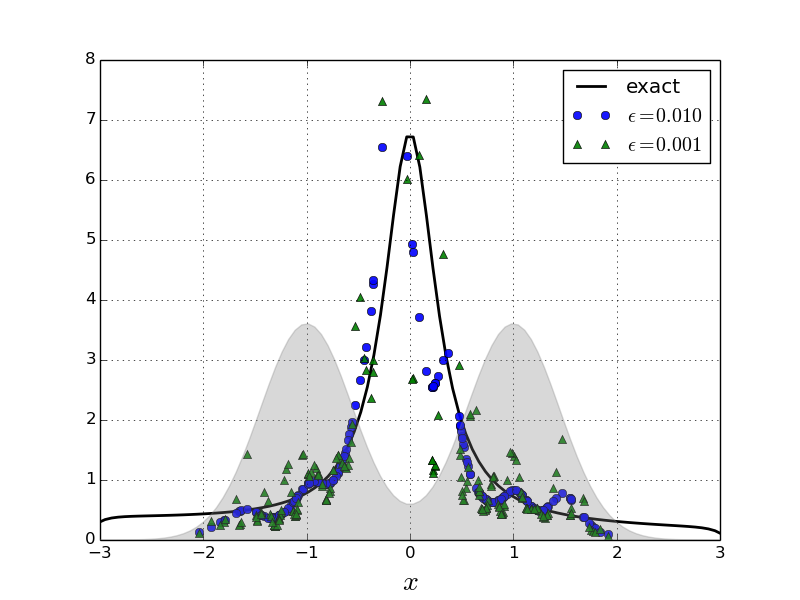}
} 
\end{tabular}
\caption{Numerical gain function approximation (G2) for a range of
  $\epsilon$.  The dimension $d=1$ and the number of particles
  $N=200$.  }
\label{fig:gain-bimodal-lin}
\end{figure*}

Note that the bias has the same scaling, $\sim \epsilon |H|$, as
$\epsilon\downarrow 0$.  However as $\epsilon$ gets larger, the two
approximations behave very differently.  For (G1), the bias grows
unbounded as $\epsilon\rightarrow \infty$.  Remarkably, for (G2), the
bias goes to zero as $\epsilon\rightarrow \infty$.
Figure~\ref{fig:eps-error-explicit} depicts the bias error for a
scalar example where $\sigma^2=1$ and $H=1$.  


The following Proposition shows that the limit
$\epsilon\rightarrow\infty$ is well-behaved for the (G2) approximation
more generally.  In fact, one recovers the constant gain
approximation in that limit.

\medskip

\begin{proposition}
Consider the gain approximation (G2) given
by~\eqref{eq:kepsN-approx}. Then,
\begin{align*}
\lim_{\epsilon\rightarrow\infty} \;\;\k_{\epsilon} &= {\sf E}[K],\\
\lim_{\epsilon\rightarrow\infty} \;\;\k_{\epsilon}^{(N)} &=  \frac{1}{N}\sum_{i=1}^N(h(X^i)-\hat{h}^{(N)})X^i.
\end{align*}
\qed
\label{prop:eps-infty}
\end{proposition}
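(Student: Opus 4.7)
The plan is to observe that linearity of $\nabla T_\epsilon$ lets us rewrite the (G2) gain as
\[
K_\epsilon(x) \;=\; \nabla T_\epsilon\phi_\epsilon(x) + \epsilon\nabla T_\epsilon(h-\hat h)(x) \;=\; \nabla T_\epsilon\Phi_\epsilon(x),\qquad \Phi_\epsilon \;:=\; \phi_\epsilon+\epsilon(h-\hat h),
\]
and similarly $K_\epsilon^{(N)}=\nabla T_\epsilon^{(N)}\Phi_\epsilon^{(N)}$. This matches the compact form of the algorithm line computing $\k(X^i)$, and reduces the problem to understanding $\Phi_\epsilon$ and the kernel operator $T_\epsilon$ as $\epsilon\to\infty$.

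\textit{Step 1 (limit of the kernel operator).} As $\epsilon\to\infty$, the Gaussian factor $g_\epsilon(x,y)=\exp(-|x-y|^2/4\epsilon)\to 1$ uniformly on compact sets, and both the Coifman normalization $r_\epsilon$ and the row-normalization $d_\epsilon$ tend to the constant function $1$. Hence the Markov density $p_\epsilon(x,y)\to 1$, and for every $f$ with admissible growth (in particular $f=e$, $f=h-\hat h$ and their products)
\[
T_\epsilon f(x)\;\longrightarrow\;\int f(y)\rho(y)\,dy=\E_\rho[f];
\]
in the empirical version $T_{ij}\to 1/N$ and $T_\epsilon^{(N)} f(X^i)\to \hat f^{(N)}$. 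In particular, on the zero-mean subspace $T_\epsilon$ and $T_\epsilon^{(N)}$ converge pointwise to the zero operator.

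\textit{Step 2 (asymptotics of $\Phi_\epsilon$).} The fixed-point equation can be written as $\phi_\epsilon=\epsilon(I-T_\epsilon)^{-1}(h-\hat h)$ on the zero-mean subspace. By Step~1, $(I-T_\epsilon)^{-1}$ tends to the identity there, so $\phi_\epsilon/\epsilon\to h-\hat h$. Setting $\tilde\Phi_\epsilon:=\Phi_\epsilon/\epsilon=\phi_\epsilon/\epsilon+(h-\hat h)$ gives $\tilde\Phi_\epsilon\to 2(h-\hat h)$ as $\epsilon\to\infty$. Next, using the explicit representation of $\nabla T_\epsilon$ and $\Phi_\epsilon=\epsilon\tilde\Phi_\epsilon$,
\[
K_\epsilon(x)\;=\;\epsilon\nabla T_\epsilon\tilde\Phi_\epsilon(x)\;=\;\tfrac{1}{2}\bigl[T_\epsilon(e\tilde\Phi_\epsilon)(x)-T_\epsilon(e)(x)\,T_\epsilon(\tilde\Phi_\epsilon)(x)\bigr].
\]
Passing $\epsilon\to\infty$ and invoking Steps~1--2, together with the zero-mean property $\E_\rho[h-\hat h]=0$, the second term inside the brackets vanishes and the first converges to $\E_\rho[2\,e(h-\hat h)]$. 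Finally, the weak-form identity obtained by taking $\psi(x)=x$ in \eqref{eq:weakform}, $\E_\rho[K]=\E_\rho[(h-\hat h)x]$, gives $\lim_{\epsilon\to\infty}K_\epsilon=\E_\rho[K]$. The empirical statement follows from the same chain of identities with $T_\epsilon^{(N)}$ and $\hat{(\cdot)}^{(N)}$ replacing $T_\epsilon$ and $\E_\rho[\cdot]$, producing $\frac{1}{N}\sum_i(h(X^i)-\hat h^{(N)})X^i$ in the limit.

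\textit{Main obstacle.} The delicate point is Step~1: justifying uniform passage to the limit inside the $\nabla T_\epsilon$ representation when the integrands contain the unbounded factor $e(x)=x$. This needs tail estimates on the Coifman kernel to control $T_\epsilon(ef)$ and $(I-T_\epsilon)^{-1}$ uniformly on the zero-mean subspace as $\epsilon$ varies; once these estimates are in place the calculation above is routine.
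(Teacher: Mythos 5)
Your proposal follows essentially the same route as the paper's own proof: both rest on $k_\epsilon(x,y)\to 1$ so that $T_\epsilon$ tends to the expectation operator, deduce $\phi_\epsilon/\epsilon\to h-\hat h$ from the fixed-point equation, expand $\nabla T_\epsilon$ via the identity $\nabla T_\epsilon f=\frac{1}{2\epsilon}[T_\epsilon(ef)-T_\epsilon(e)T_\epsilon(f)]$, and identify the limit $\E_\rho[(h-\hat h)x]$ with $\E_\rho[{\sf K}]$ through the weak form; your bundling of the two terms into $\Phi_\epsilon=\phi_\epsilon+\epsilon(h-\hat h)$ is only a cosmetic repackaging of the paper's two-term computation. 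The analytic gap you flag at the end (uniformity of the limit against the unbounded factor $e(x)=x$ and convergence of $(I-T_\epsilon)^{-1}$) is likewise left as an assumption in the paper's sketch, so nothing is missing relative to it.
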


\section{Numerics}\label{sec:numerics}

Suppose the density $\pr$ is a mixture of two Gaussians,
$\frac{1}{2}{\cal N}(-\mu,\sigma^2I) + \frac{1}{2}{\cal
  N}(+\mu,\sigma^2I)$, where $\mu=[1,0,\ldots,0]\in \Re^d$, and
$\sigma^2=0.2$. The observation function $h(x)=x_1$.  In this case,
the exact gain function $\k(x) = [\k_1(x),0,\ldots,0]$ where
$\k_1(\cdot)$ is obtained using the explicit formula~\eqref{eq:scalar}
as in the scalar case.

Figure~\ref{fig:gain-bimodal-lin} depicts a comparison between the
exact solution and the approximate solution obtained using the kernel
approximation formula (G2).  The dimension $d=1$ and the number of
particles $N=200$.  
\begin{itemize}
\item The part~(a) of the
figure depicts the gain function for a range of (relatively large)
$\epsilon$ values $\{0.1,0.2,0.4,0.8\}$ where the error is dominated
by the bias.  The constant gain approximation is also depicted and,
consistent with Proposition~\ref{prop:eps-infty}, the (G2)
approximation converges to the constant as $\epsilon$ gets larger.  

\medskip

\item The part~(b) of the figure depicts a comparison for a range of
(very small) $\epsilon$ values $\{0.01,0.001\}$.  At $N=200$
particles, the error in this range is dominated by the variance.  This
is manifested in a somewhat irregular spread of the particles for
these $\epsilon$ values.    
\end{itemize}

In the next study, we experimentally evaluated the error for a range
of $\epsilon$ and $d$, again with a fixed $N=200$.  For a single
simulation, the error is defined as
\[
\text{Error}:=\sqrt{\frac{1}{N}\sum_{i=1}^N |\k_\epsilon^{(N)}(X^i)-\k(X^i)|^2}.
\]
Figure~\ref{fig:bimodal-lin-error}
and~\ref{fig:bimodal-lin-error-loglog} depict the averaged error
obtained from averaging over $M=100$ simulations.  In each simulation,
the parameters $\epsilon$ and $d$ are fixed but a different
realization of $N=200$ particles is sampled from the density $\pr$.  

\medskip

\begin{itemize}
\item Figure \ref{fig:bimodal-lin-error} depicts the averaged error as
  $\epsilon$ and $d$ are varied. As $\epsilon$ becomes large, the
  kernel gain converges the constant gain formula. For relatively
  large values of $\epsilon$, the error is dominated by bias which is
  insensitive to the size of dimension $d$.  

\medskip

\item Figure \ref{fig:bimodal-lin-error-loglog} depicts the averaged
  error for small values of $\epsilon$.  The logarithmic scale is used
  to better assess the asymptotic characteristics of the error as a function of
  $\epsilon$ and $d$.  Recall that the estimates in
  Theorem~\ref{thm:error-bound} predict that the error scales as
  $\epsilon^{-1-d/4}$ for the small $\epsilon$ large $N$ limit.  To verify
  the prediction, an empirical exponent was computed by fitting a
  linear curve to the error data on the logarithmic scale.  The
  empirical exponents together with the error estimates predicted by
  Theorem~\ref{thm:error-bound} are tabulated in Table
  \ref{tab:exponents}.  It is observed that the empirical exponents
  are smaller than the predictions.  The gap suggests that the error
  bound may not be tight.  A more thorough comparison is a subject
  of continuing investigation.   
\end{itemize}
\begin{table}[h]
\centering
\begin{tabular}{c|cccc}
$d$ & $1$ & $2$ & $3$ & $4$ \\ \hline
$1+d/4$ & $1.25 $ & $1.5$ & $1.75$ & $2.0$ \\ \hline 
$\alpha$ & $0.83$ & $1.12$ & $1.36$ & $1.56$ 
\end{tabular}
\caption{Comparison of empirically obtained exponents ($\alpha$) with the
  theoretical exponents $1+d/4$.  Empirical exponents are obtained by
  curve fitting the data in Fig.~\ref{fig:bimodal-lin-error-loglog}.}
\label{tab:exponents} 
\end{table}
\begin{figure*}[t]
\centering
\begin{tabular}{cc}
\subfigure[]{
\includegraphics[width=1.0\columnwidth]{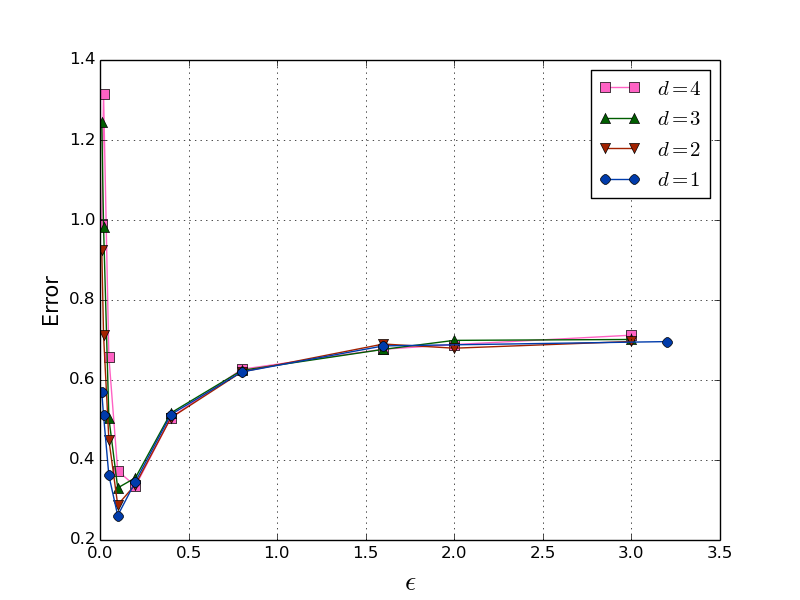}
\label{fig:bimodal-lin-error}
} &
\subfigure[]{
\includegraphics[width=1.0\columnwidth]{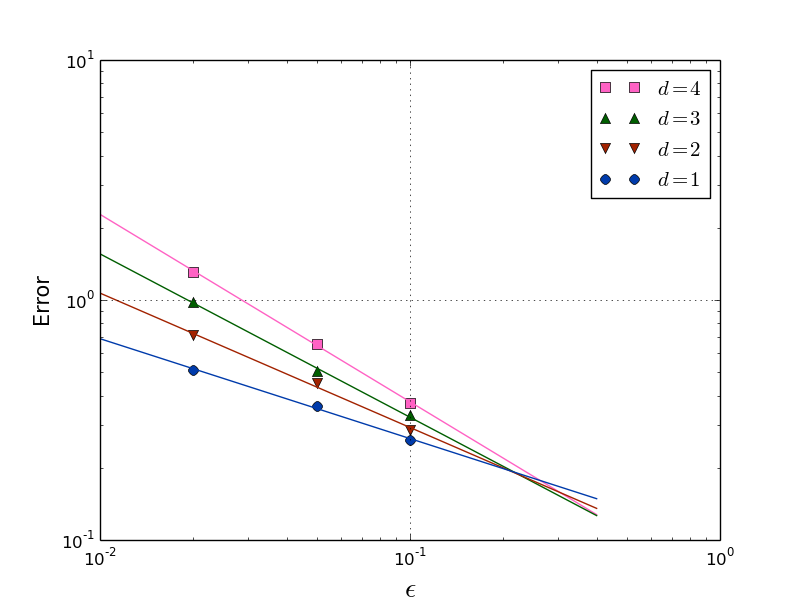}
\label{fig:bimodal-lin-error-loglog}
} 
\end{tabular}
\caption{Averaged error over $M=100$ simulations with  $N=200$ particles: (a) Linear scale
  over a range of $\epsilon$ and (b) Logarithmic scale for small
  $\epsilon$.}
\label{fig:bomodal-lin}
\end{figure*}

\bibliographystyle{plain}
\bibliography{fpfbib,ref}

\begin{thebibliography}{10}

\bibitem{bakry2013}
D.~Bakry, I.~Gentil, and M.~Ledoux.
\newblock {\em Analysis and geometry of {M}arkov diffusion operators}, volume
  348.
\newblock Springer Science \& Business Media, 2013.

\bibitem{berntorp2015}
K~Berntorp.
\newblock Feedback particle filter: {A}pplication and evaluation.
\newblock In {\em 18th Int. Conf. Information Fusion, Washington, DC}, 2015.

\bibitem{berntorp2016}
K.~Berntorp and P.~Grover.
\newblock Data-driven gain computation in the feedback particle filter.
\newblock In {\em 2016 American Control Conference (ACC)}, pages 2711--2716.
  IEEE, 2016.

\bibitem{coifman}
R.~R. Coifman and S.~Lafon.
\newblock Diffusion maps.
\newblock {\em Applied and computational harmonic analysis}, 21(1):5--30, 2006.

\bibitem{hein2006}
M.~Hein, J.~Audibert, and U.~Luxburg.
\newblock Graph laplacians and their convergence on random neighborhood graphs.
\newblock {\em Journal of Machine Learning Research}, 8(Jun):1325--1368, 2007.

\bibitem{hutson2005}
V.~Hutson, J.~Pym, and M.~Cloud.
\newblock {\em Applications of functional analysis and operator theory}, volume
  200.
\newblock Elsevier, 2005.

\bibitem{laugesen15}
R.~S. Laugesen, P.~G. Mehta, S.~P. Meyn, and M.~Raginsky.
\newblock {P}oisson's equation in nonlinear filtering.
\newblock {\em SIAM Journal on Control and Optimization}, 53(1):501--525, 2015.

\bibitem{matsuura2016suboptimal}
Y.~Matsuura, R.~Ohata, K.~Nakakuki, and booktitle={AIAA Guidance, Navigation,
  and Control Conference} pages={1620}~year={2016} Hirokawa, R.
\newblock Suboptimal gain functions of feedback particle filter derived from
  continuation method.

\bibitem{Sean_CDC2016}
A.~Radhakrishnan, A.~Devraj, and S.~Meyn.
\newblock Learning techniques for feedback particle filter design.
\newblock In {\em Conference on Decision and COntrol (CDC), 2016}, pages
  648--653. IEEE, 2014.

\bibitem{stano2014}
P.~M. Stano, A.~K. Tilton, and R.~Babu{\v{s}}ka.
\newblock Estimation of the soil-dependent time-varying parameters of the
  hopper sedimentation model: The {FPF} versus the {BPF}.
\newblock {\em Control Engineering Practice}, 24:67--78, 2014.

\bibitem{amirCDC2016}
A.~Taghvaei and P.~G. Mehta.
\newblock Gain function approximation in the feedback particle filter.
\newblock (To appear) in the Conference on Decision and Control (CDC), 2016,
  available online at arXiv:1603.05496.

\bibitem{tilton2013}
A.~K Tilton, S.~Ghiotto, and P.~G Mehta.
\newblock A comparative study of nonlinear filtering techniques.
\newblock In {\em Information Fusion (FUSION), 2013 16th International
  Conference on}, pages 1827--1834, 2013.

\bibitem{yang2016}
T.~Yang, R.~S. Laugesen, P.~G. Mehta, and S.~P. Meyn.
\newblock Multivariable feedback particle filter.
\newblock {\em Automatica}, 71:10--23, 2016.

\bibitem{taoyang_TAC12}
T.~Yang, P.~G. Mehta, and S.~P. Meyn.
\newblock Feedback particle filter.
\newblock {\em IEEE Trans. Automatic Control}, 58(10):2465--2480, October 2013.

\end{thebibliography}
\appendix 
\subsection{Kernel-based algorithm}
This section provides additional details for the kernel-based
algorithm presented in \Sec{sec:kernel}. We begin with some
definitions and then provide justification for the three main steps,
fixed-point equations~\eqref{eq:fixedPt}-\eqref{eq:fixed-point-epsN}.

\newP{Definitions} The Gaussian kernel is denoted as $g_\epsilon(x,y):=\exp(-\frac{\|x-y\|^2}{4\epsilon})$. 
The approximating family of operators $\{\Teps,\epsilon \ge 0\}$ are
defined as follows: For $f:\Re^d \to \Re$, 
\begin{equation}
\Teps f(x) := \int_{\Re^d}k_\epsilon(x,y)f(y)\pr(y)\ud y,
\label{eq:Teps-appendix}
\end{equation} 
where
\begin{equation*}
\keps:(x,y)= \frac{1}{n_\epsilon(x)}\frac{g_\epsilon(x,y)}{\sqrt{\int
g_\epsilon(y,z) \rho(z) \ud z}}, \label{eq:keps}
\end{equation*}
and $n_\epsilon$ is the normalization factor, chosen such that $\int
\keps(x,y)\rho(y) \ud y=1$. 
The finite-$N$ approximation of these operators, denoted as
$\{\TepsN\}_{\epsilon \geq 0, N \in \mathbb{N}}$, is defined as,
\begin{equation}
\TepsN f(x) := \frac{1}{N}\sum_{i=1}^N \kepsN(x,X^i)f(X^i),
\label{eq:TepsN-appendix}
\end{equation} 
where 
\begin{equation}
\kepsN(x,y) :=  \frac{1}{n_\epsilon^{(N)}(x)}\frac{\geps(x,y) }{\sqrt{\sum_{j=1}^N g_\epsilon(y,X^j)}},
\label{eq:kepsN}
\end{equation}
and $n_\epsilon^{(N)}$ is chosen such that $\frac{1}{N}\sum_{i=1}^N\keps(x,X^i)=1$.



\newP{Justification of the fixed-point equations~\eqref{eq:fixedPt}-\eqref{eq:fixed-point-epsN}} 

\medskip

\noindent (i) Definition of the semigroup $e^{\epsilon\Delta_\pr}$ implies:
\begin{equation*}
e^{\epsilon\Delta_\pr} f = f + \int_0^\epsilon e^{s\Delta_\pr}\Delta_\pr f \ud s
\end{equation*}
On choosing $f= \phi$ where $\Delta_\pr \phi = - (h-\hat{h})$ yields the exact fixed point equation \eqref{eq:fixedPt}.

\medskip

\noindent (ii)  The justification for approximation $\Teps \approx
e^{\epsilon\Delta_\pr}$ is the following Lemma. 

\begin{lemma} Consider the family of Markov operators
  $\{\Teps\}_{\epsilon \geq 0}$. Fix a smooth function $f$. Then  
\begin{equation}
\Teps f(x) = f(x) + \epsilon \Delta_\pr f(x) +O(\epsilon^2).
\label{eq:Teps-lemma}
\end{equation}
\label{lem:Teps}
\end{lemma}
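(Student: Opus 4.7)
The plan is to reduce the lemma to the standard Laplace asymptotic expansion for Gaussian integrals and then track how the two layers of normalization built into $\keps$ (first division by $\sqrt{q_\epsilon(y)}$ where $q_\epsilon(y):=\int g_\epsilon(y,z)\rho(z)\,\ud z$, then division by $n_\epsilon(x)$) combine to produce exactly the weighted Laplacian $\Delta_\rho f=\Delta f+\nabla\log\rho\cdot\nabla f$.

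\textbf{Step 1: The base asymptotic.} For any smooth and sufficiently decaying $u:\Re^d\to\Re$, a standard Laplace/Watson-type expansion of the Gaussian integral gives
\begin{equation*}
\int_{\Re^d} g_\epsilon(x,y)\,u(y)\,\ud y \;=\; (4\pi\epsilon)^{d/2}\bigl[u(x)+\epsilon\,\Delta u(x)+O(\epsilon^2)\bigr],
\end{equation*}
obtained by the change of variables $y=x+\sqrt{2\epsilon}\,\xi$ and Taylor-expanding $u$ to second order under the standard Gaussian. I would prove (or cite) this first and then apply it to $u=\rho$, yielding
\begin{equation*}
q_\epsilon(x) \;=\; (4\pi\epsilon)^{d/2}\bigl[\rho(x)+\epsilon\,\Delta\rho(x)+O(\epsilon^2)\bigr],
\end{equation*}
so that, after a Taylor expansion of $t\mapsto t^{-1/2}$,
\begin{equation*}
\frac{1}{\sqrt{q_\epsilon(y)}} \;=\; (4\pi\epsilon)^{-d/4}\frac{1}{\sqrt{\rho(y)}}\Bigl[1-\tfrac{\epsilon}{2}\tfrac{\Delta\rho(y)}{\rho(y)}+O(\epsilon^2)\Bigr].
\end{equation*}

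\textbf{Step 2: Ratio form and expansion.} Absorbing the $(4\pi\epsilon)^{-d/4}$ factor, write $\pi_\epsilon(y):=\rho(y)/\sqrt{q_\epsilon(y)}$, so that
\begin{equation*}
\Teps f(x) \;=\; \frac{\int g_\epsilon(x,y)\,f(y)\,\pi_\epsilon(y)\,\ud y}{\int g_\epsilon(x,y)\,\pi_\epsilon(y)\,\ud y}.
\end{equation*}
Setting $\psi:=\sqrt{\rho}$ and $\eta:=\Delta\rho/\rho$, Step~1 gives $\pi_\epsilon(y)\propto \psi(y)[1-\tfrac{\epsilon}{2}\eta(y)+O(\epsilon^2)]$, and applying the base asymptotic separately to numerator and denominator (with the shared $(4\pi\epsilon)^{d/2}$ prefactor cancelling) leaves
\begin{equation*}
\Teps f(x) \;=\; \frac{f\psi+\epsilon\bigl[\Delta(f\psi)-\tfrac{1}{2}f\psi\eta\bigr]+O(\epsilon^2)}{\psi+\epsilon\bigl[\Delta\psi-\tfrac{1}{2}\psi\eta\bigr]+O(\epsilon^2)}\,(x).
\end{equation*}

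\textbf{Step 3: Simplification.} Expanding the denominator via $(a+\epsilon b)^{-1}=a^{-1}-\epsilon a^{-2}b+O(\epsilon^2)$ and collecting the $O(\epsilon)$ term, the $\tfrac{1}{2}f\eta$ contributions cancel between numerator and denominator, giving
\begin{equation*}
\Teps f(x) \;=\; f(x)+\epsilon\left[\frac{\Delta(f\psi)(x)}{\psi(x)}-f(x)\frac{\Delta\psi(x)}{\psi(x)}\right]+O(\epsilon^2).
\end{equation*}
Using the Leibniz identity $\Delta(f\psi)=(\Delta f)\psi+2\nabla f\cdot\nabla\psi+f\Delta\psi$, the bracket reduces to $\Delta f+2\nabla f\cdot\nabla\log\psi=\Delta f+\nabla f\cdot\nabla\log\rho=\Delta_\rho f$, which is the desired identity.

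\textbf{Expected difficulty.} The calculus is routine; the subtle point is the bookkeeping that explains \emph{why} the kernel is normalized by $\sqrt{q_\epsilon(y)}$ (the Coifman--Lafon $\alpha=\tfrac12$ diffusion-map normalization): it is precisely this half-power that causes the spurious $\tfrac12 f\eta$ terms to cancel in Step~3 and leaves the symmetric weighted Laplacian $\Delta_\rho$ rather than a lower-order differential operator. The only genuine analytic point is controlling the $O(\epsilon^2)$ remainders uniformly in $x$ over the support of interest; under the smoothness and decay assumed for $\rho$ and $f$ this follows by a standard tail estimate on the Gaussian, which I would sketch but not belabour.
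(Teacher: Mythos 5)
Your proof is correct and takes essentially the same route as the paper's: both reduce the claim to the first-order short-time expansion of the Gaussian integral, $\int g_\epsilon(x,y)\,u(y)\,\ud y = (4\pi\epsilon)^{d/2}\bigl[u(x)+\epsilon\,\Delta u(x)+O(\epsilon^2)\bigr]$ (the paper's heat-semigroup properties), and then push it through the two-stage normalization in the ratio $\Geps(f/\sqrt{\Geps 1})/\Geps(1/\sqrt{\Geps 1})$ --- you are simply writing out in full the Taylor-expansion bookkeeping that the paper compresses into ``the properties can be used to show.'' One minor caveat on your closing commentary only: the $\tfrac12 f\eta$ terms cancel for \emph{any} normalization exponent $\alpha$ (they are common to numerator and denominator), and what the choice $\alpha=\tfrac12$ actually buys is the drift coefficient $2(1-\alpha)=1$ on $\nabla f\cdot\nabla\log\rho$, which is what makes the limiting operator exactly $\Delta_\pr$.
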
  
\begin{proof}
Introduce the heat semigroup $\Geps$ as,
\begin{equation}
G_\epsilon f (x) := \int_{\Re^d} g_\epsilon(x-y) f(y) \pr(y)\ud y.
\label{eq:Geps}
\end{equation}
The following are the two properties of the heat semigroup:
\begin{align}
G_0 f(x) &= \pr(x)f(x), \label{eq:heat-semigp-p1}\\
\frac{\ud}{\ud \epsilon}G_\epsilon f(x) \big|_{\epsilon=0}&= \Delta (\pr f)(x).\label{eq:heat-semigp-p2}
\end{align}
In terms of $\Geps$, the operator $\Teps$ is expressed as,
\begin{equation}
\Teps f(x) = \frac{1}{n_\epsilon(x)}\Geps (\frac{f}{\sqrt{\Geps
    1}})(x),
\label{eq:GepsTeps}
\end{equation}
where $n_\epsilon(x) = \Geps (\frac{1}{\sqrt{\Geps 1}})(x)$. 
The Taylor expansion of $\Teps f(x)$ yields,
\begin{align*}
T_\epsilon f(x) &= T_0 f(x) + \epsilon \frac{\ud }{\ud \epsilon} \Teps f (x)\big|_{\epsilon=0} + O(\epsilon^2).
\end{align*}
Now, the properties~\eqref{eq:heat-semigp-p1} and
\eqref{eq:heat-semigp-p2} can be used to show that $T_0 f(x)=f(x)$ and
$\frac{\ud }{\ud \epsilon} \Teps f (x)\big|_{\epsilon=0} = \Delta_\pr
f(x)$. 
\end{proof}

\medskip

\noindent (iii) The justification for the third step is Law of Large
numbers. Moreover the following lemma provides a bound for the $L^2$ error. 
\begin{lemma} 
Consider the Markov operators $\Teps$ and $\TepsN$ defined in \eqref{eq:Teps-appendix} and \eqref{eq:TepsN-appendix}. 
Then $\forall f \in L^2(\rho)$,
\begin{equation*}
\expect\left[\|\Teps f  -\TepsN f\|_{L^2(\pr)}^2\right] \leq \frac{C}{N\epsilon^{d/2}}.
\end{equation*} 
\label{lem:TepsN}
\end{lemma}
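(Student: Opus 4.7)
The plan is to decompose $\TepsN f - \Teps f$ via the triangle inequality into a pure Monte Carlo error and a normalization error. Introduce the intermediate object $\widetilde T_\epsilon^{(N)} f(x) := \frac{1}{N}\sum_{i=1}^N \keps(x,X^i)f(X^i)$ and write $\TepsN f - \Teps f = (\TepsN f - \widetilde T_\epsilon^{(N)} f) + (\widetilde T_\epsilon^{(N)} f - \Teps f)$. The second piece is a standard sample mean minus its expectation; the first captures the discrepancy between the empirical normalization in $\kepsN$ and the exact one in $\keps$. The core tool throughout will be the variance identity $\expect|\frac{1}{N}\sum_i F(x,X^i) - \int F(x,y)\pr(y)\ud y|^2 \le \frac{1}{N}\int F(x,y)^2 \pr(y)\ud y$ for i.i.d.\ $X^i \sim \pr$, integrated against $\pr(x)\ud x$ to produce the $L^2(\pr)$ estimate.

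The first quantitative step is to extract the $\epsilon$-scaling from the kernel itself. A direct Gaussian computation gives $\int \geps(x,y)^2\pr(y)\ud y = O(\epsilon^{d/2})$ uniformly in $x$; and since $\Geps 1(y) = \Theta(\epsilon^{d/2})$ and $n_\epsilon(x) = \Theta(\epsilon^{d/4})$ (provided $\pr$ stays bounded above and below on the effective support of the Gaussian window), substitution into the definition of $\keps$ yields $\int \keps(x,y)^2 \pr(y)\ud y \le C\epsilon^{-d/2}$ uniformly in $x$, with the analogous bound in the $x$-variable. This is the origin of the $\epsilon^{-d/2}$ factor in the target estimate.

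For the Monte Carlo piece I would apply the variance identity with $F(x,y) := \keps(x,y)f(y)$, integrate in $x$ against $\pr$, and use Fubini to obtain $\expect[\|\widetilde T_\epsilon^{(N)} f - \Teps f\|_{L^2(\pr)}^2] \le \frac{1}{N}\int\int \keps(x,y)^2 f(y)^2 \pr(y)\pr(x)\ud y\ud x$; the kernel estimate above bounds this by $C(f)/(N\epsilon^{d/2})$, as desired. For the normalization piece, I would write both kernels in the factored form $\geps/[n\sqrt{D}]$, where $D(y)$ is $\Geps 1(y)$ in the exact case and $\frac{1}{N}\sum_j \geps(y,X^j)$ in the empirical case, and similarly for $n$. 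A direct variance calculation gives $\expect[(D^{(N)}(y)-\Geps 1(y))^2] \le C\epsilon^{d/2}/N$, with the analogue for the $n$-normalization. A first-order expansion of $1/(n\sqrt D)$ around the true values then converts these concentration bounds into pointwise control on $|\kepsN - \keps|$, which after integration against $f^2 \pr \otimes \pr$ again contributes at order $O(1/(N\epsilon^{d/2}))$.

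The main obstacle I anticipate is precisely this normalization step: one must ensure that the square-root denominator $\sqrt{D^{(N)}}$ stays uniformly bounded below so that the Taylor expansion is valid, and that its remainder contributes only higher-order corrections in $N$ and $\epsilon$. This will typically require either a positive lower bound on $\pr$ on the compact sets effectively probed by the Gaussian window of scale $\sqrt{\epsilon}$, or a truncation argument combined with Gaussian tail estimates for $\geps$ against the potentially vanishing tails of $\pr$. Once the denominators are safely controlled, the two pieces combine via the triangle inequality to produce the stated $C/(N\epsilon^{d/2})$ bound.
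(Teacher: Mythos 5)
Your proposal is correct and rests on the same two pillars as the paper's proof: the i.i.d.\ variance identity $\expect|\frac{1}{N}\sum_i F(x,X^i)-\int F(x,y)\pr(y)\ud y|^2\le\frac{1}{N}\int F(x,y)^2\pr(y)\ud y$, and the Gaussian scaling $\int \geps(x,y)^2\pr(y)\ud y=O(\epsilon^{d/2})$, which after dividing by the normalizations ($n_\epsilon=\Theta(\epsilon^{d/4})$, $\Geps 1=\Theta(\epsilon^{d/2})$) is exactly where the $\epsilon^{-d/2}$ comes from. The bookkeeping differs: the paper carries out the variance computation for the \emph{unnormalized} heat operator, bounding $\expect[\|\Geps f-\GepsN f\|^2_{L^2(\pr)}]\le C\|f\|^2_{L^2(\pr)}/(N\epsilon^{d/2})$, and then asserts the result transfers to $\Teps$ via the representation $\Teps f=n_\epsilon^{-1}\Geps(f/\sqrt{\Geps 1})$; you instead work with the normalized kernel $\keps$ directly, splitting off an intermediate operator $\widetilde T_\epsilon^{(N)}$ and treating the discrepancy between the empirical and exact normalizations as a separate concentration-plus-linearization step. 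The two routes are equivalent in substance, but your version makes explicit the part the paper leaves implicit: replacing $n_\epsilon$ and $\sqrt{\Geps 1}$ by their empirical counterparts introduces additional random error, and controlling it requires the denominators to stay bounded below (uniform upper and lower bounds on $\pr$ over the $\sqrt{\epsilon}$-window, or a truncation argument in the tails). Flagging that as the real technical obstacle is accurate -- it is the step neither your sketch nor the paper's fully discharges -- and your order-of-magnitude check that this normalization error also contributes $O(1/(N\epsilon^{d/2}))$ is consistent with the stated bound.
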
  
\begin{proof}
The bound is proved by explicitly evaluating the error
$\expect\left[\|\Geps f  -G_\epsilon^{(N)}f\|_{L^2(\pr)}^2\right]$
where $\Geps$ is defined in \eqref{eq:Geps} and $G_\epsilon^{(N)}
f(x):=\frac{1}{N}\sum_{i=1}^N g_\epsilon(x,X^i)f(X^i)$.  Since
$\{X^i\}$ are i.i.d.,
\[
\Expect\left[\|\Geps f  -G_\epsilon^{(N)}f\|_2\right] \leq \frac{1}{N}\int\int g_\epsilon^2(x,y)f(y)^2 \pr(y)\pr(x)\ud y \ud x.
\]
Subsequently, using the fact that $g_\epsilon^2(x,y) =
\frac{C}{\epsilon^{d/2}}g_{2\epsilon}(x,y)$ and $\int g_{2\epsilon}(x,y)\pr(x)\ud x \leq C$, one obtains,
\[
\expect\left[\|\Geps f  -\GepsN f\|_{L^2(\pr)}^2\right] \leq \frac{C\|f\|^2_{L^2(\pr)}}{N\epsilon^{d/2}},
\]
and the estimate follows because of~\eqref{eq:GepsTeps}.
\end{proof}

\subsection{Sketch of the Proof of Theorem \ref{thm:error-bound}}
\newP{Estimate for Bias} The crucial property is that $\Teps$ is a
bounded strictly contractive operator on $H_0^1$ with
\begin{equation}
\|(I-\Teps)^{-1}\|_{H^1_0(\pr)} = \frac{1}{\epsilon \lambda_1} +
O(1),
\label{eq:Teps_norm}
\end{equation}
where $\lambda_1$ is the spectral bound for $\Delta_{\rho}$.  Since
$\phieps$ solves the fixed-point equation~\eqref{eq:fixed-point-eps},
\[
\phieps = \Teps \phieps + \epsilon (h-\hah) + O(\epsilon^2).
\]
Therefore,
\begin{align*}
\phi - \phieps & = \phi - \Teps \phi  + \Teps (\phi - \phieps) - \epsilon
(h-\hah) + O(\epsilon^2) \\
& = -\epsilon\Delta_{\rho}\phi + \Teps (\phi - \phieps) - \epsilon
(h-\hah) + O(\epsilon^2), 
\end{align*}
where we have used Lemma~\ref{lem:Teps}.  Noting $-\Delta_{\rho}\phi =
(h-\hah)$,
\[
\phi - \phieps = \Teps (\phi - \phieps) + O(\epsilon^2).
\]
The bias estimate now follows from using the norm
estimate~\eqref{eq:Teps_norm}.



\newP{Estimate for variance} The variance estimate follows from using
Lemma~ \ref{lem:TepsN}.  The key steps are to show that 
\[
\|\TepsN f - \Teps f\|_{H^1_0(\rho)} \to 0\quad \text{a.s},
\]
which follows from the LLN, and that $\TepsN$ are bounded and compact
on $H^1_0(\pr)$. This allows one to conclude that
$\|(I-\TepsN)^{-1}\|_{H^1_0(\pr)}$ is bounded.  These forms of
approximation error bounds in a somewhat more general context of
compact operators appears in [Chapter 7 of~\cite{hutson2005}].


\subsection{Proof of proposition \ref{prop:explicit-error}}
For the Gaussian density ${\cal N}(0,\sigma^2 I)$, the completion of square is used to obtain
an explicit form for the operator $\Teps$:
\begin{equation*}
\Teps f(x) = \int \frac{1}{\sqrt{4\pi \epsilon(1-\delta_\epsilon)}}\exp\left[-\frac{(y-(1-\delta_\epsilon)x)^2}{4\epsilon(1-\delta_\epsilon)}\right]f(y)\ud y,
\end{equation*}
where
$\delta_\epsilon:=\epsilon\frac{\sigma^2+4\epsilon}{\sigma^4+3\epsilon\sigma^2+4\epsilon^2}$. For
the linear function $h(x)=H\cdot x$, the fixed-point
equation~\eqref{eq:fixed-point-eps} admits an explicit solution,
\begin{equation*}
\phi_\epsilon = \frac{\epsilon}{\delta_\epsilon}H\cdot x
\end{equation*}
where we used the fact that $\Teps x = 1-\delta_\epsilon x$. 

Since the solution $\phi_\epsilon$  is known in an explicit form, one can easily compute
the gain function solution in an explicit form:
\begin{equation*}
\begin{aligned}
\text{(G1)}\quad\k_\epsilon&=\frac{\epsilon}{\delta_\epsilon}H
=\sigma^2 H-\epsilon\frac{\sigma^2-4\epsilon}{\sigma^2 + 4\epsilon}H,\\
\text{(G2)}\quad\k_\epsilon
&= \sigma^2 H+ \frac{\epsilon\sigma^6}{(\sigma^2 + 4\epsilon)(\sigma^4 + 3\epsilon\sigma^2 + 4\epsilon^2)}H.
\end{aligned}
\end{equation*}
The error estimates follow based on the exact Kalman gain solution $\k = \sigma^2H$.
\subsection{Proof of Proposition \ref{prop:eps-infty}}
The proof relies on the fact that $\epsilon^{d/2}\geps(x,y)$ converges to a constant as $\epsilon \to \infty$. This would imply that $\keps(x,y)\to 1$ as $\epsilon \to \infty$. Therefore for a fixed function $f$, 
\begin{equation*}
\lim_{\epsilon \to \infty }\Teps f (x) = \int f(x)\pr(x)\ud x =: \hat{f} 
\end{equation*}
Define the limit $T_\infty:=\lim_{\epsilon\to\infty}\Teps$ and observe: 
\begin{equation*}
\lim_{\epsilon\to \infty} \frac{\phieps}{\epsilon} = \lim_{\epsilon \to \infty}(I-\Teps)^{-1}(h-\hat{h}) =(I-T_\infty)^{-1}(h-\hat{h})=h-\hat{h}
\end{equation*}
where the last step uses the fact that $\hat{h}=T_\infty h$ and we assumed $(I-\Teps)^{-1}h\to (I-T_\infty)h$. Then the gain approximation formula \eqref{eq:kepsN-approx} implies:
\begin{equation*}
\begin{aligned}
\lim_{\epsilon \to \infty} \k_\epsilon(x)&=\lim_{\epsilon \to \infty} \frac{1}{2}\left[\Teps (e\frac{\phieps}{\epsilon}) - \Teps(e)\Teps(\frac{\phieps}{\epsilon})\right] \\
&+\lim_{\epsilon \to \infty}\frac{1}{2}\left[\Teps(eh) - \Teps(e)\Teps(h)\right] \\
&=\int x(h(x)-\hat{h})\rho(x)\ud x 
\end{aligned}
\end{equation*} 
The argument for the finite-$N$ case is identical and omitted on
account of space.
\end{document}